\newtheorem{theo}{Theorem}
\newtheorem{cor}{Corollary}
\newtheorem{lm}{Lemma}
\newtheorem{pr}{Proposition}
\begin{document}

\title[On  approximation of solutions of operator-differential
     equations $\ldots $]
    {On  approximation of solutions of operator-differential
     equations with their entire solutions of exponential type}


\author{V. M. Gorbachuk}
\address{National Technical University "KPI", 37 Peremogy Prosp., Kyiv, 06256, Ukraine}
\email{v.m.horbach@gmail.com}

\subjclass[2000]{Primary 34G10}
\date{01/04/2016;\ \  Revised 12/04/2016}
\keywords{Hilbert and Banach spaces, differential-operator equation,
  weak solution, $C_{0}$-semigroup of linear operators, entire
  vector-valued function, entire vector-valued function of exponential
  type, the best approximation, direct and inverse theorems of the
  approximation theory.}

\begin{abstract}
  We consider an equation of the form $y'(t) + Ay(t) = 0, \ t \in [0,
  \infty)$, where $A$ is a nonnegative self-adjoint operator in a
  Hilbert space. We give direct and inverse theorems on approximation
  of solutions of this equation with its entire solutions of
  exponential type. This establishes a one-to-one correspondence
  between the order of convergence to $0$ of the best approximation of
  a solution and its smoothness degree. The results are illustrated with
  an example, where the operator $A$ is generated by a second
  order elliptic differential expression in the space $L_{2}(\Omega)$
  \ (the domain $\Omega \subset \mathbb{R}^{n}$ is bounded with smooth
  boundary) and a certain boundary condition.
\end{abstract}

\maketitle

\vspace*{-5mm}

{\bf 1.} \ Let $A$ be a nonnegative self-adjoint operator in a Hilbert
space $\mathfrak{H}$ with a scalar product $(\cdot, \cdot)$. Denote by
$C_{\{1\}}(A)$ the set of all its exponential type entire vectors (see
[4]), namely,
$$
\begin{aligned}
C_{\{1\}}(A) = \Big\{f \in C^{\infty}(A) =  \bigcap_{n = 1}^{\infty}\mathcal{D}(A^{n})\big|
\exists \alpha  > 0, \ \exists c & = c(f) > 0:
\\
 \|A^{n}f\| & \leq c\alpha^{n},
\ n \in \mathbb{N}_{0} = \mathbb{N}\cup \{0\}\Big\}
\end{aligned}
$$
(everywhere in the sequel $c$ denotes various numerical constants
corresponding to the situations under consideration, $\mathcal{D}(A)$ is
a domain of $A$ and $\|f\| = \sqrt{(f, f)}$). The number
$$
\sigma (f, A) = \inf \left\{\alpha > 0 \bigl| \exists c > 0, \ \forall n \in \mathbb{N}_{0}: \|A^{n}f\| \leq c\alpha^{n}\right\}
$$
is called the type of the vector $f$ with respect to the operator $A$.

As has been shown in [3] that
$$
C_{\{1\}}(A) = \left\{f \in \mathfrak{H}\bigl| f = E(\lambda)g, \ \forall \lambda > 0, \ \forall g \in \mathfrak{H}\right\},
$$
where $E(\lambda) = E([0, \lambda])$ is the spectral measure of $A$.

Now, consider the equation
$$
y'(t) + Ay(t) = 0, \quad t \in \mathbb{R}_{+} = [0, \infty). \leqno (1)
$$
By a weak solution of this equation we mean a continuous vector-valued
function $y(t):\mathbb{R}_{+} \mapsto \mathfrak{H}$ such that for any
$t \in \mathbb{R}_{+}$,
\vspace*{-1mm}
$$
\int_{0}^{t} y(s)\,ds \in \mathcal{D}(A) \quad \text{and} \quad y(t) = -A \int_{0}^{t} y(s)\,ds + y(0).
$$
\vspace*{-2mm}

Denote by $S$ the set of all weak solutions of (1). As it was established
in [1],
$$
S = \left\{y(t):\mathbb{R}_{+} \mapsto \mathfrak{H}\big| \ y(t) = e^{-At}f, f \in \mathfrak{H} \right\}, \leqno (2)
$$
where
$$
e^{-At}f = \int_{0}^{\infty} e^{-\lambda t}\,dE(\lambda)f.
$$
Note that the set of all strong (or simply) solutions of (1) is given
by formula (2) with $f$ ranging over whole $\mathcal{D}(A)$.

It is not difficult to verify that $S$ is a Hilbert space with the norm
$$
\|y\|_{S} = \sup_{t \in \mathbb{R}_{+}} \left\|e^{-At}f\right\| = \|f\|. \leqno (3)
$$

If the operator $A$ is bounded, then each weak solution $y(t)$ of
equation (1) can be extended to an entire $\mathfrak{H}$-valued vector
function $y(z)$ of exponential type,
$$
\sigma (y) = \inf\left\{\alpha > 0: \|y(z)\| \leq ce^{\alpha |z|}\right\}.
$$
But it is not the case if $A$ is unbounded. The set $S_{0}$ of all
weak solutions of (1) admitting an extension to an entire vector
function of exponential type is described by the following theorem.

\begin{theo} A weak solution $y(t)$ of equation (1) belongs to $S_{0}$
  if and only if it can be represented in form (2) with $f \in
  C_{\{1\}}(A)$. The set $S_{0}$ is dense in $S$, and $\sigma (y) =
  \sigma (f, A)$.
\end{theo}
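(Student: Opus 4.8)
The statement has three parts: (i) $y \in S_0$ iff $y(t) = e^{-At}f$ with $f \in C_{\{1\}}(A)$; (ii) $\sigma(y) = \sigma(f,A)$; (iii) $S_0$ is dense in $S$.

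Part (i): Given a weak solution $y(t) = e^{-At}f$, we need to characterize when it extends to an entire function of exponential type. The natural candidate for the extension is $y(z) = e^{-Az}f = \int_0^\infty e^{-\lambda z}\,dE(\lambda)f$.

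Forward direction: if $f \in C_{\{1\}}(A)$, then by the result cited from [3], $f = E(\lambda_0)f$ for... wait, let me reconsider. The cited result says $C_{\{1\}}(A) = \{f : f = E(\lambda)g, \exists \lambda, g\}$... actually re-reading: "$f = E(\lambda)g, \forall \lambda > 0, \forall g$" — that's oddly stated; I think it means $\exists \lambda > 0, \exists g \in \mathfrak{H}: f = E(\lambda)g$, i.e., $f$ has bounded spectral support. Let me plan around that interpretation.

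So my approach:

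First, for the forward direction of (i), suppose $f \in C_{\{1\}}(A)$, so $\|A^n f\| \le c\alpha^n$. I'd define $y(z) = \sum_{n=0}^\infty \frac{(-z)^n}{n!} A^n f$ and show this series converges in $\mathfrak{H}$ for all $z \in \mathbb{C}$ (by the estimate $\|A^n f\| \le c\alpha^n$, giving $\|y(z)\| \le c e^{\alpha|z|}$), hence defines an entire function of exponential type $\le \alpha$; and it coincides with $e^{-At}f$ for $t \ge 0$ (both have the same Taylor expansion at $0$, or compare via spectral calculus). Conversely, if $y(t) = e^{-At}f$ extends to an entire function $y(z)$ of exponential type $\alpha$, I'd show $f \in C^\infty(A)$ with the exponential bound: the derivatives $y^{(n)}(0) = (-1)^n A^n f$ exist (the extension forces $f \in \mathcal{D}(A^n)$ for all $n$), and Cauchy's estimate on $\|y(z)\|$ gives $\|A^n f\| = \|y^{(n)}(0)\| \le \frac{n!}{R^n}\sup_{|z|=R}\|y(z)\| \le \frac{n! \, c\, e^{\alpha R}}{R^n}$; optimizing over $R$ (take $R = n/\alpha$) and using Stirling yields $\|A^n f\| \le c' (e\alpha)^n$ or a similar bound, showing $f \in C_{\{1\}}(A)$. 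The main subtlety here — and the step I expect to be the principal obstacle — is justifying rigorously that the mere existence of an entire $\mathfrak{H}$-valued extension of $e^{-At}f$ forces $f$ into the domains of all powers $A^n$ and identifies $y^{(n)}(0)$ with $(-A)^n f$; this needs a careful argument via the spectral representation, e.g. pairing with spectral projections $E(\lambda)$ or using that weak/strong analyticity in Hilbert space coincide.

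For part (ii), $\sigma(y) = \sigma(f,A)$: the inequality $\sigma(y) \le \sigma(f,A)$ follows from the construction above (exponential type of the series is controlled by $\alpha$, and taking infimum over admissible $\alpha$). For $\sigma(y) \ge \sigma(f,A)$, I'd use the spectral representation $\|y(z)\|^2 = \int_0^\infty |e^{-\lambda z}|^2 \, d\|E(\lambda)f\|^2$ together with the Cauchy-estimate optimization, or more directly observe that the spectral measure of $f$ must be supported in $[0, \sigma(y)]$ (since $e^{-\lambda z}$ grows like $e^{\lambda|z|}$ along the negative real axis, a spectral component beyond $\sigma(y)$ would violate the type bound), whence $\|A^n f\| \le \sigma(y)^n \|f\|$ and $\sigma(f,A) \le \sigma(y)$. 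Combining with the cited description of $C_{\{1\}}(A)$ via bounded spectral support makes this clean.

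For part (iii), density of $S_0$ in $S$: since $S \cong \mathfrak{H}$ isometrically via $y \mapsto f$ by (3), it suffices to show $C_{\{1\}}(A)$ is dense in $\mathfrak{H}$. This is immediate: for any $f \in \mathfrak{H}$, the vectors $f_\lambda = E(\lambda)f$ lie in $C_{\{1\}}(A)$ (bounded spectral support, using the [3] description), and $f_\lambda \to f$ in $\mathfrak{H}$ as $\lambda \to \infty$ by the properties of the spectral measure; translating back via the isometry, $e^{-At}f_\lambda \in S_0$ and $\|e^{-At}f_\lambda - e^{-At}f\|_S = \|f_\lambda - f\| \to 0$.
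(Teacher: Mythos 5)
Your plan is correct and, at its core, follows the same route as the paper: the decisive step in both is that an entire extension of exponential type $\sigma(y)$ forces the spectral measure $(E(\lambda)f,f)$ to be concentrated on $[0,\sigma(y)]$, which simultaneously gives $f\in C_{\{1\}}(A)$ and $\sigma(f,A)\le\sigma(y)$; the density argument via $E(\lambda)f\to f$ transferred through the isometry (3) is identical. The differences are mostly presentational. For the direct part the paper uses $f=E(\alpha)f$ with $\alpha=\sigma(f,A)$ and extends the spectral integral $\int_0^\alpha e^{-\lambda z}\,dE(\lambda)f$, which gives $\|y(z)\|\le e^{\sigma(f,A)|z|}\|f\|$ at once, whereas you sum the power series $\sum_n \frac{(-z)^n}{n!}A^nf$ --- equivalent, but the spectral form makes the type bound immediate. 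For the converse, the paper estimates $\|y(-t)\|^2=\int_0^\infty e^{2\lambda t}\,d(E(\lambda)f,f)\le ce^{2\sigma(y)t}$ and applies Fatou as $t\to\infty$ to localize the measure; the obstacle you flag (why the entire extension is given by the spectral expression at $z=-t$, and why $f\in C^\infty(A)$ at all) is genuine, and your proposed fix --- pair $y(z)$ with spectral projections and use uniqueness of analytic continuation of the resulting scalar functions --- is exactly what is needed (the paper passes over this point silently). One caution: the Cauchy-estimate route in the form you state it yields only $\|A^nf\|\le c\,(e\sigma(y))^n$ (a sharper use of Stirling actually gives $c\sqrt{n}\,\sigma(y)^n$), so as written it proves membership in $C_{\{1\}}(A)$ but not the inequality $\sigma(f,A)\le\sigma(y)$; for the exact type equality you should rely on the spectral-support argument, as you indeed propose in part (ii), and note that this argument also bypasses the need to establish $f\in C^\infty(A)$ beforehand.
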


\begin{proof}
  Let $f \in C_{\{1\}}(A)$. Then $f = E(\alpha)f$, where $\alpha =
  \sigma (f, A)$. By (2),
$$
y (t) = \int_{0}^{\alpha} e^{-\lambda t}\,dE(\lambda)f.
$$
From this it follows that $y(t)$ can be extended to an entire
vector-valued function $y(z)$ and
$$
\|y(z)\|^{2} = \int_{0}^{\alpha} e^{-2\text{Re}z\lambda}\,d(E(\lambda)f, f) \leq e^{2\sigma(f, A)|z|}\|f\|^{2},
$$
that is, $y(z)$ is an entire $\mathfrak{H}$-valued function of exponential type $\sigma (y) \leq \sigma (f, A)$.

Conversely, if $y(t) = e^{-At}f$ admits an extension to an entire
vector-valued function of exponential type $\sigma (y)$, then, by
virtue of
$$
\|y(-t)\|^{2} = \int_{0}^{\infty} e^{2\lambda t}\,d(E(\lambda)f, f) \leq ce^{2\sigma(y)t}, \quad t \geq 0,
$$
we have
$$
\int_{\sigma (y)}^{\infty} e^{2t(\lambda - \sigma (y))}\,d(E(\lambda)f, f) \leq c.
$$
Passing to the limit under the integral sign as $t \to \infty$, we
conclude, on the basis of the Fatou theorem, that the measure
generated by the monotone function $(E(\lambda)f, f)$ is concentrated
on the interval $[0, \sigma (y)]$. Hence, $\sigma (f, A) \leq \sigma
(y)$.

Density of $S_{0}$ in $S$ follows from the density in $\mathfrak{H}$
of the set $\{E([0, \alpha])f, \forall \alpha >0, \forall f \in
\mathfrak{H}\}$.
\end{proof}

In view of Theorem 1, it is reasonable to ask whether it is possible
to approximate an arbitrary weak solution of equation (1) with its
exponential type entire solutions. An answer to the question is given
below.  We prove direct an inverse theorems which ascertain the
relationship between the degree of smoothness of a solution and the
rate of convergence to $0$ of its best approximation. In doing so, the
operator approach developed in [7, 4, 5] plays an important role.

\medskip {\bf 2.} \ Recall some definitions and notations of the
approximation theory required to formulate further results.

For $y \in S $ and a number $r > 0$, we put
$$
\mathcal{E}_{r}(y) = \inf_{y_{0} \in S_{0}: \sigma (y_{0}) \leq r} \|y - y_{0}\|_{S}.
$$
Thus, $\mathcal{E}_{r}(y)$ is the best approximation of a weak
solution $y(t)$ of equation (1) with its entire solutions of exponential
type not exceeding $r$. If $y$ is fixed, the function
$\mathcal{E}_{r}(y)$ does not increase and, since $\overline{{S}_{0}}
= S, \ \mathcal{E}_{r}(y) \to 0$ as $r \to \infty$.

For $f \in \mathfrak{H}$, set also
$$
\mathcal{E}_{r}(f, A) = \inf_{f_{0} \in C_{\{1\}}(A): \sigma (f_{0}, A) \leq r} \|f - f_{0}\| = \|(I - E(r))f\|.
$$
If $y(t) = e^{-At}f$, then, by virtue of (3),
$$
\mathcal{E}_{r}(y) = \mathcal{E}_{r}(f, A). \leqno (4)
$$

Besides, for any $k \in \mathbb{N}_{0}$, we introduce the function
$$
\omega_{k}(t, y) = \sup_{|h| \leq t}\sup_{s \in \mathbb{R}_{+}} \Big\| \sum_{j = 0}^{k}
(-1)^{k - j}C_{k}^{j}y(s + jh)\Big\|, \quad k \in \mathbb{N}; \quad \omega_{0}(t, y) \equiv \|y\|_{S}, \quad t > 0.
$$
Taking into account (2) and the equality $e^{-As}y(t) = y (t + s)$, we conclude that
$$
\forall k \in \mathbb{N}_{0}:\omega_{k}(t, y) = \sup_{|h| \leq t}\left\|\left(e^{-Ah} - I\right)^{k}y\right\|_{S}
$$
($I$ is the identity operator).

The following theorem establishes a relation between
$\mathcal{E}_{r}(y)$ and $\omega_{k}(t, y)$, and it is an analog of
the well-known Jackson's theorem on  approximation of a continuous
periodic function by trigonometric polynomials.
\begin{theo} Let $y \in S$. Then
$$
\forall k \in \mathbb{N}, \ \exists c_{k} > 0: \mathcal{E}_{r}(y) \leq c_{k}\omega_{k}\left(\frac{1}{r}, y\right), \quad  r > 0. \leqno (5)
$$
\end{theo}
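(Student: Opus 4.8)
The strategy is to transfer the whole problem to the spectral side. By the identity (4), it suffices to show
$$
\mathcal{E}_{r}(f,A) = \|(I-E(r))f\| \leq c_{k}\,\omega_{k}\!\left(\tfrac1r,y\right),
$$
and by the reformulation given just before the statement, $\omega_{k}(t,y) = \sup_{|h|\leq t}\|(e^{-Ah}-I)^{k}f\|$. So I would fix $r>0$ and $f\in\mathfrak H$, and estimate $\|(I-E(r))f\|$ from below by $\|(e^{-Ah}-I)^{k}f\|$ for a suitably chosen $h$ with $|h|\leq 1/r$. Writing $\|(e^{-Ah}-I)^{k}f\|^{2} = \int_{0}^{\infty}(e^{-\lambda h}-1)^{2k}\,d(E(\lambda)f,f)$, and restricting the integral to $[r,\infty)$ — which only decreases it — I get
$$
\|(e^{-Ah}-I)^{k}f\|^{2} \geq \int_{r}^{\infty}(1-e^{-\lambda h})^{2k}\,d(E(\lambda)f,f).
$$
The plan is then to choose $h = 1/r$ (or some fixed fraction of it) and show that on the range $\lambda\geq r$ the scalar weight $(1-e^{-\lambda h})^{2k}$ is bounded below by a positive constant $\delta_{k}$ depending only on $k$: indeed for $\lambda h\geq 1$ one has $1-e^{-\lambda h}\geq 1-e^{-1}$, so $(1-e^{-\lambda h})^{2k}\geq (1-e^{-1})^{2k}=:\delta_{k}>0$. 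Hence
$$
\|(e^{-Ah}-I)^{k}f\|^{2} \geq \delta_{k}\int_{r}^{\infty} d(E(\lambda)f,f) = \delta_{k}\,\|(I-E(r))f\|^{2},
$$
which gives $\mathcal{E}_{r}(f,A)\leq \delta_{k}^{-1/2}\,\|(e^{-A/r}-I)^{k}f\| \leq \delta_{k}^{-1/2}\sup_{|h|\leq 1/r}\|(e^{-Ah}-I)^{k}f\| = c_{k}\,\omega_{k}(1/r,y)$ with $c_{k}=\delta_{k}^{-1/2}=(1-e^{-1})^{-k}$.

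The one point that needs genuine care — the main obstacle — is the legitimacy of the spectral integral manipulations when $f\notin\mathcal D(A)$ and the integrals over $[0,\infty)$ may diverge for the operators $A^{n}$; but here every operator appearing, namely $e^{-Ah}-I$ and $E(r)$, is bounded for each fixed $h\geq 0$ and $r>0$, so $(e^{-Ah}-I)^{k}f$ and $(I-E(r))f$ are honest vectors in $\mathfrak H$ and the scalar spectral measures $(E(\lambda)f,f)$ are finite; the identities above are then just the functional calculus applied to bounded Borel functions of $A$, which is unconditionally valid. One should also note that the relevant $h$ must be taken nonnegative (so that $e^{-Ah}$ is a bounded contraction and the estimate $1-e^{-\lambda h}\geq 1-e^{-1}$ makes sense for $\lambda h\geq 1$), which is compatible with the definition of $\omega_{k}$ since the supremum there is over all $|h|\leq t$, in particular over $h=1/r>0$.

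A few remarks on robustness of the argument. The constant produced, $c_{k}=(1-e^{-1})^{-k}$, grows geometrically in $k$, which is harmless since $k$ is fixed in the statement; one could optimize by choosing $h=\theta/r$ with $\theta$ tuned to $k$, but this is cosmetic. The inequality is in fact sharp in spirit: the lower bound on $(1-e^{-\lambda h})^{2k}$ over $\lambda\geq r$ cannot be improved beyond a $k$-dependent constant, and the discarded mass on $[0,r)$ is exactly what one is allowed to lose, since $\mathcal{E}_{r}$ only measures the tail. Thus the proof is short once one commits to the spectral representation; no approximation-theoretic construction of a near-best entire solution is needed, because the explicit extremal $y_{0}(t)=\int_{0}^{r}e^{-\lambda t}\,dE(\lambda)f$ already realizes the infimum defining $\mathcal{E}_{r}(y)$ by the computation $\|(I-E(r))f\|$ recorded before the theorem.
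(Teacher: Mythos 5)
Your proposal is correct and follows essentially the same route as the paper: pass to the spectral representation, bound $\|(e^{-Ah}-I)^{k}f\|^{2}$ from below by the tail integral over $[r,\infty)$ with $h=1/r$, use $1-e^{-\lambda h}\geq 1-e^{-1}$ there, and invoke $\mathcal{E}_{r}(y)=\|(I-E(r))f\|$. Your bookkeeping is in fact slightly cleaner than the printed proof, which drops a square on $\mathcal{E}_{1/t}(y)$ and states the constant as $(1-e^{-1})^{k}$ where your $c_{k}=(1-e^{-1})^{-k}$ is the correct one.
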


\begin{proof}
By (2), $y(t) = e^{-At}f, \ f \in \mathfrak{H}$. From (3), (4), it follows that
$$
\begin{aligned}
\omega_{k}^{2}(t, y)   = & \sup_{0 \leq s \leq t}  \left\|\left(e^{-As} - I\right)^{k}y\right\|_{S}^{2}  \geq \left\|\left(e^{-At} - I\right)^{k}y\right\|_{S}^{2} = \sup_{s \in \mathbb{R}_{+}} \left\|\left(e^{-At} - I\right)^{k}e^{-As}f\right\|^{2}
\\
 = & \left\|\left(e^{-At} - I\right)^{k}f\right\|^{2} = \int_{0}^{\infty}\left(e^{-\lambda t} - 1\right)^{2k}\, d(E(\lambda)f, f)
\\
 \geq & \int_{\frac{1}{t}}^{\infty}\left(e^{-\lambda t} - 1\right)^{2k}\, d(E(\lambda)f, f)
\geq \left(1 - e^{-1}\right)^{2k}\mathcal{E}_{\frac{1}{t}}(y).
\end{aligned}
$$
So,
$$
\forall t > 0: \mathcal{E}_{\frac{1}{t}}(y) \leq \left(1 - e^{-1}\right)^{k}\omega_{k}(t, y).
$$
Setting $r = \frac{1}{t}$ and $c_{k} = \left(1 - e^{-1}\right)^{k}$, we obtain (5).
\end{proof}


Denote by $C^{n}(\mathbb{R}_{+}, \mathfrak{H})$ the set of all $n$
times continuously differentiable on $\mathbb{R}_{+}$
$\mathfrak{H}$-valued vector-valued functions. Since the operator $A$
is closed, the inclusion $y \in S \cap C^{n}(\mathbb{R}_{+},
\mathfrak{H})$ implies that $y^{(k)} \in S, \ k = 1, 2, \dots, n$.
\begin{theo} Suppose that $y \in C^{n}(\mathbb{R}_{+}, \mathfrak{H}), \ n
  \in \mathbb{N}_{0}$. Then
$$
\forall r > 0, \ \forall k \in \mathbb{N}_{0}: \mathcal{E}_{r}(y) \leq \frac{c_{k + n}}{r^{n}}\omega_{k}\left(\frac{1}{r}, y^{(n)}\right),
$$
where the constants $c_{k}$ are the same as in Theorem 2.
\end{theo}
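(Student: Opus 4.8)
The plan is to reduce the estimate to Theorem 2 (applied with $k$ replaced by $k+n$) by means of the auxiliary inequality
$$
\omega_{k+n}(t, y) \leq t^{n}\,\omega_{k}\!\left(t, y^{(n)}\right), \qquad t > 0. \leqno (\ast)
$$
Granting $(\ast)$, Theorem 2 with $k+n$ in place of $k$ gives, for every $r>0$,
$$
\mathcal{E}_{r}(y) \leq c_{k+n}\,\omega_{k+n}\!\left(\frac{1}{r}, y\right) \leq c_{k+n}\left(\frac{1}{r}\right)^{n}\omega_{k}\!\left(\frac{1}{r}, y^{(n)}\right) = \frac{c_{k+n}}{r^{n}}\,\omega_{k}\!\left(\frac{1}{r}, y^{(n)}\right),
$$
which is precisely the assertion, with exactly the constant $c_{k+n}$ of Theorem 2. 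Since for $n=0$ the statement coincides with Theorem 2, I would assume $n\geq 1$.

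To prove $(\ast)$ I would first establish the integral identity
$$
\left(e^{-Ah}-I\right)^{n}f = \int_{0}^{h}\!\!\cdots\!\int_{0}^{h} y^{(n)}\!\left(s_{1}+\cdots+s_{n}\right) ds_{1}\cdots ds_{n}, \qquad h\geq 0,
$$
where $y(t)=e^{-At}f$ as in (2), by induction on $n$. The base case $n=1$ is $\left(e^{-Ah}-I\right)f = y(h)-y(0) = \int_{0}^{h} y'(s)\,ds$. For the inductive step I use that $y\in S\cap C^{n}(\mathbb{R}_{+},\mathfrak{H})$ forces $y^{(j)}\in S$ for $0\leq j\leq n$, so $y^{(j)}(u)=e^{-Au}y^{(j)}(0)$ and hence $\left(e^{-Ah}-I\right)y^{(j)}(u) = y^{(j)}(u+h)-y^{(j)}(u) = \int_{0}^{h} y^{(j+1)}(u+s)\,ds$ (this only needs $y^{(j)}\in C^{1}$, i.e. $y\in C^{j+1}$); since $e^{-Ah}-I$ is a bounded operator for $h\geq 0$, it may be carried under the Bochner integral, and the cube of integration grows by one factor $[0,h]$.

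Next I would apply the bounded operator $\left(e^{-Ah}-I\right)^{k}$ to this identity and carry it inside, using $\left(e^{-Ah}-I\right)^{k}y^{(n)}(u) = e^{-Au}\left(e^{-Ah}-I\right)^{k}g$ with $g:=y^{(n)}(0)$. Because $\left\|e^{-Au}\right\|\leq 1$ for $u\geq 0$ and, by (3), $\left\|\left(e^{-Ah}-I\right)^{k}g\right\| = \left\|\left(e^{-Ah}-I\right)^{k}y^{(n)}\right\|_{S}\leq \omega_{k}\!\left(h, y^{(n)}\right)$, the integrand has norm at most $\omega_{k}\!\left(h, y^{(n)}\right)$, so that
$$
\left\|\left(e^{-Ah}-I\right)^{k+n}f\right\| \leq h^{n}\,\omega_{k}\!\left(h, y^{(n)}\right) \leq t^{n}\,\omega_{k}\!\left(t, y^{(n)}\right), \qquad 0\leq h\leq t.
$$
Taking the supremum over $0\leq h\leq t$ and recalling that $\omega_{k+n}(t,y)=\sup_{0\leq h\leq t}\left\|\left(e^{-Ah}-I\right)^{k+n}y\right\|_{S}=\sup_{0\leq h\leq t}\left\|\left(e^{-Ah}-I\right)^{k+n}f\right\|$ yields $(\ast)$.

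The only step requiring real care is the integral identity: one must verify continuity of the $\mathfrak{H}$-valued integrands on the compact cubes and the admissibility of interchanging the bounded operators $\left(e^{-Ah}-I\right)^{j}$ with the Bochner integrals — this is where membership $y^{(j)}\in S$ (equivalently, adequate domain conditions on $f$), together with the closedness of $A$, is used. The monotonicity of $\omega_{k}(\cdot, y^{(n)})$ and the sign bookkeeping in the modulus are routine. No inverse-type argument and no information on $\omega_{k}$ near $0$ is needed: the whole proof rests on $(\ast)$ and the already-proved Theorem 2.
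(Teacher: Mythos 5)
Your proposal is correct and follows essentially the same route as the paper: the paper also reduces to Theorem 2 with $k+n$ in place of $k$ via the inequality $\omega_{k+n}(t,y)\leq t^{n}\omega_{k}(t,y^{(n)})$, obtained from the $n$-fold integral representation of $\left(e^{-Ah}-I\right)^{n}$ together with contractivity of the semigroup and $A^{n}y=(-1)^{n}y^{(n)}$. Your inductive verification of the integral identity just makes explicit what the paper does in one display.
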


\begin{proof}
  Let $y \in C^{n}(\mathbb{R}_{+}, \mathfrak{H}), r > 0$ and $0 \leq t
  < \frac{1}{r}$. Using properties of a contraction $C_{0}$-semigroup,
  we get
$$
\left\|\left(e^{-At} - I\right)^{k + n}y(s)\right\| = \left\|\left(e^{-At} - I\right)^{n}\left(e^{-At} - I\right)^{k}y(s)\right\|
$$
\vspace*{-2mm}
$$
\leq \int_{0}^{t} \dots \int_{0}^{t} \left\|e^{-A(s_{1} + \dots s_{n})}\right\|\left\|\left(e^{-At} - I\right)^{k}A^{n}y(s)\right\|\,ds_{1} \dots ds_{n}
$$
\vspace*{-2mm}
$$
 \leq t^{n}\left\|\left(e^{-At} - I\right)^{k}y^{(n)}(s)\right\|,
$$
whence
$$
\omega_{k + n}\left(\frac{1}{r}, y\right) \leq \frac{1}{r^{n}} \omega_{k}\left(\frac{1}{r}, y^{(n)}\right)
$$
and, because of (5),
$$
\mathcal{E}_{r}(y) \leq c_{k + n}\omega_{k + n}\left(\frac{1}{r}, y\right) \leq \frac{c_{k + n}}{r^{n}}\omega_{k}\left(\frac{1}{r}, y^{(n)}\right),
$$
which is what had to be proved.
\end{proof}


Setting, in Theorem 3, $k = 0$ and taking into account that
$\omega_{0}(t, y^{(n)}) = \|y^{(n)}\|_{S}$, we arrive at the following
assertion.

\begin{cor} Let $y \in C^{n}(\mathbb{R}_{+}, \mathfrak{H}), \ n \in
  \mathbb{N}$. Then
$$
\forall r > 0: \mathcal{E}_{r}(y) \leq \frac{c_{n}}{r^{n}}\|y^{(n)}\|_{S}.
$$
\end{cor}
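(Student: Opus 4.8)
The plan is to obtain this statement as a direct specialization of Theorem 3, so that essentially nothing remains to be done. Note first that $\mathcal{E}_{r}(y)$ is only defined for $y \in S$, so we are in fact given $y \in S \cap C^{n}(\mathbb{R}_{+}, \mathfrak{H})$; since $A$ is closed, this already guarantees $y^{(n)} \in S$, and hence the quantity $\|y^{(n)}\|_{S}$ on the right-hand side is finite.

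First I would apply the inequality of Theorem 3 with the parameter choice $k = 0 \in \mathbb{N}_{0}$, which gives, for every $r > 0$,
$$
\mathcal{E}_{r}(y) \leq \frac{c_{0 + n}}{r^{n}}\, \omega_{0}\!\left(\frac{1}{r}, y^{(n)}\right) = \frac{c_{n}}{r^{n}}\, \omega_{0}\!\left(\frac{1}{r}, y^{(n)}\right).
$$
Then I would simply recall the definition of the zeroth modulus of continuity, namely $\omega_{0}(t, y^{(n)}) \equiv \|y^{(n)}\|_{S}$ for all $t > 0$, and substitute it into the previous line. This yields $\mathcal{E}_{r}(y) \leq c_{n} r^{-n} \|y^{(n)}\|_{S}$ for all $r > 0$, which is the desired inequality, with $c_{n} = (1 - e^{-1})^{n}$ inherited unchanged from Theorems 2 and 3.

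There is no real obstacle here: the corollary is a pure instantiation of Theorem 3 at $k = 0$, the only genuine point being the trivial observation that for $k = 0$ the modulus $\omega_{k}$ reduces by definition to the $S$-norm of the $n$-th derivative, so that the factor $\omega_{0}(1/r, y^{(n)})$ does not in fact depend on $r$.
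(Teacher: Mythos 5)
Your proposal is correct and coincides with the paper's own derivation, which obtains the corollary precisely by setting $k = 0$ in Theorem 3 and using $\omega_{0}(t, y^{(n)}) \equiv \|y^{(n)}\|_{S}$. The only cosmetic point is the explicit value you assign to $c_{n}$: the corollary only needs $c_{n}$ to be the constant from Theorem 2 (whatever its exact value), so quoting $(1 - e^{-1})^{n}$ is harmless but not required.
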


For numbers $h > 0$ and $k \in \mathbb{N}_{0}$, we put
$$
\Delta_{h}^{k} = \left(e^{-Ah} - I\right)^{k} = \sum_{j = 0}^{k}(-1)^{k - j}C_{k}^{j}e^{-Ajh}.
$$
\begin{lm} If $y \in S_{0}$ and $\sigma (y) = \alpha$, then
$$
\forall h > 0, \ \forall k, n \in \mathbb{N}_{0}: \left\|\Delta_{h}^{k}y^{(n)}\right\|_{S} \leq (\alpha h)^{k}\alpha^{n}\|y\|_{S}. \leqno (6)
$$
\end{lm}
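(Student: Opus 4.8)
The plan is to push everything through the spectral theorem for $A$ and reduce the estimate to an elementary scalar inequality. Since $y \in S_{0}$ with $\sigma(y) = \alpha$, Theorem~1 gives $y(t) = e^{-At}f$ for some $f \in C_{\{1\}}(A)$ with $\sigma(f,A) = \alpha$, and (as in the proof of Theorem~1) $f = E(\alpha)f$; equivalently, the monotone function $(E(\lambda)f,f)$ is constant for $\lambda > \alpha$, so the measure it generates is concentrated on $[0,\alpha]$. Because $f \in C^{\infty}(A)$ and the integration is over the bounded set $[0,\alpha]$, I may differentiate the representation $y(t) = \int_{0}^{\alpha} e^{-\lambda t}\,dE(\lambda)f$ under the integral sign, obtaining $y^{(n)}(t) = (-1)^{n}A^{n}e^{-At}f = e^{-At}\big((-A)^{n}f\big)$. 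Thus $y^{(n)}$ is the weak solution of (1) with initial value $(-A)^{n}f$, and $\Delta_{h}^{k}y^{(n)}$ is the weak solution with initial value $(e^{-Ah}-I)^{k}(-A)^{n}f$; the latter vector equals $E(\alpha)$ applied to itself, so $\Delta_{h}^{k}y^{(n)} \in S_{0}$ and, by (3), $\big\|\Delta_{h}^{k}y^{(n)}\big\|_{S} = \big\|(e^{-Ah}-I)^{k}A^{n}f\big\|$.

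Next I would rewrite this norm as a spectral integral over $[0,\alpha]$,
$$
\big\|(e^{-Ah}-I)^{k}A^{n}f\big\|^{2} = \int_{0}^{\alpha}\big(e^{-\lambda h}-1\big)^{2k}\lambda^{2n}\,d(E(\lambda)f,f),
$$
and estimate the integrand pointwise on $[0,\alpha]$ using the two elementary bounds $0 \leq 1 - e^{-\lambda h} \leq \lambda h$ (valid for every $\lambda h \geq 0$) and $\lambda \leq \alpha$, which combine to
$$
\big(e^{-\lambda h}-1\big)^{2k}\lambda^{2n} \leq (\lambda h)^{2k}\lambda^{2n} = h^{2k}\lambda^{2(k+n)} \leq h^{2k}\alpha^{2(k+n)} = (\alpha h)^{2k}\alpha^{2n}.
$$
Factoring this constant out of the integral and using $\int_{0}^{\alpha} d(E(\lambda)f,f) = \|f\|^{2} = \|y\|_{S}^{2}$ yields $\big\|\Delta_{h}^{k}y^{(n)}\big\|_{S}^{2} \leq (\alpha h)^{2k}\alpha^{2n}\|y\|_{S}^{2}$, and taking square roots gives (6).

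The argument is essentially routine once the spectral picture is in place; the only point requiring a little care is the bookkeeping that identifies $\big\|\Delta_{h}^{k}y^{(n)}\big\|_{S}$ with the $\mathfrak{H}$-norm of the corresponding initial vector and justifies differentiating the spectral integral — both of which are harmless here because $f \in C^{\infty}(A)$ and the support of the measure is the compact interval $[0,\alpha]$. I do not anticipate any genuine obstacle.
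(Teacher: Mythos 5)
Your proof is correct and follows essentially the same route as the paper: both reduce, via the spectral representation and the identification $\|\cdot\|_{S}=\|f\|$ from (3), to the scalar bound $0\leq 1-e^{-\lambda h}\leq \lambda h$ on the support $[0,\alpha]$ of the measure $d(E(\lambda)f,f)$. The paper simply keeps the factor $e^{-2\lambda t}\leq 1$ inside the integral and then takes the supremum over $t$, rather than phrasing $\Delta_{h}^{k}y^{(n)}$ as the solution with initial vector $(e^{-Ah}-I)^{k}(-A)^{n}f$, which is an inessential difference.
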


\noindent{\it Proof}.
  It follows from the inequality
$$
1 - \lambda h - e^{-\lambda h} \leq 0 \quad  (\lambda \geq 0, \ h > 0)
$$
and the representation $y(t) = e^{-At}f$ that
$$
\begin{aligned}
\left\|\Delta_{h}^{k}y^{(n)}\right\|^{2} = &
\int_{0}^{\alpha} \left(1 - e^{-\lambda h}\right)^{2k}e^{-2\lambda
t}\lambda^{2n}\,d(E(\lambda)f,f)
\\
\leq & \int_{0}^{\alpha}(\lambda h)^{2k}\lambda^{2n}\,d(E(\lambda)f, f) \leq (\alpha h)^{2k}\alpha^{2n}\|f\|.
\end{aligned}
$$
\vspace{-1mm}
This and (3) imply
$$
\qquad\qquad\qquad\qquad\quad\left\|\Delta_{h}^{k}y^{(n)}\right\|_{S} \leq (\alpha h)^{k}\alpha^{n}\|f\| = (\alpha h)^{k}\alpha^{n}\|y\|_{S}.\qquad\qquad\qquad\qquad\quad\qed
$$
\vspace{-2mm}


Taking in (6) $k = 0$, we arrive at an analog of Bernstein's
inequality, namely \vspace*{-1mm}
$$
\forall n \in \mathbb{N}: \left\|y^{(n)}\right\| \leq \alpha^{n}\|y\|_{S}. \leqno (7)
$$
Putting there $n = 0$, we obtain
$$
\forall n \in \mathbb{N}: \left\|\Delta_{h}^{k}y\right\|_{S} \leq (\alpha h)^{k}\|y\|_{S} = (\alpha h)^{k}\alpha^{n}\|y\|_{S}.
$$

It should be noted that the inequality
$$
\mathcal{E}_{r}(y) \leq \frac{c}{r^{n}}, \quad r > 0, \quad  n \in \mathbb{N},     \leqno (8)
$$
does not yet imply the inclusion $y \in C^{n}(\mathbb{R}_{+},
\mathfrak{H})$. Nevertheless, the following statement, inverse to
Theorem 3, is valid.

\begin{theo} Suppose that $y \in S$, and let $\omega (t)$ be a continuity
  module type function, i.e.,

  $1) \ \omega (t)$ is continuous and nondecreasing on
  $\mathbb{R}_{+}$;

$2) \ \omega (0) = 0$;

$3) \ \exists c > 0, \ \forall t > 0: \omega (2t) \leq c\omega (t)$. \\
In order that $y \in C^{n}(\mathbb{R}_{+}, \mathfrak{H})$, it is sufficient that there exist a number $m > 0$ such that \vspace*{-1mm}
$$
\forall r > 0, \ \forall n \in \mathbb{N}: \mathcal{E}_{r}(y) \leq \frac{m}{r^{n}}\omega \left(\frac{1}{r}\right). \leqno (9)
$$
\end{theo}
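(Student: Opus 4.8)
\smallskip\noindent The plan is to turn the tail estimate (9) for the spectral measure of $f$ into the integrability condition that characterizes $y\in C^{n}(\mathbb{R}_{+},\mathfrak{H})$. By (2), $y(t)=e^{-At}f$ with $f\in\mathfrak{H}$. Since $-A$ generates the contraction $C_{0}$-semigroup $e^{-At}$, the function $t\mapsto e^{-At}f$ is $n$ times continuously differentiable on $\mathbb{R}_{+}$ exactly when $f\in\mathcal{D}(A^{n})$, and then $y^{(k)}(t)=(-1)^{k}e^{-At}A^{k}f$ for $0\le k\le n$; one direction of this is the closedness remark made before Theorem 3, the converse $f\in\mathcal{D}(A^{n})\Rightarrow e^{-A\cdot}f\in C^{n}(\mathbb{R}_{+},\mathfrak{H})$ being a standard property of $C_{0}$-semigroups. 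By the spectral theorem, $f\in\mathcal{D}(A^{n})$ if and only if $\int_{0}^{\infty}\lambda^{2n}\,d(E(\lambda)f,f)<\infty$, so the whole statement reduces to deducing the finiteness of this integral from (9).

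Since $\mathcal{E}_{r}(y)=\mathcal{E}_{r}(f,A)=\|(I-E(r))f\|$ by (4) and the definition of $\mathcal{E}_{r}(f,A)$, the hypothesis (9) reads
$$
\int_{(r,\infty)}d(E(\lambda)f,f)=\bigl\|(I-E(r))f\bigr\|^{2}\le\frac{m^{2}}{r^{2n}}\,\omega^{2}\!\left(\frac1r\right),\qquad r>0 .
$$
I would then split the spectral integral over $[0,1]$ and the dyadic rings $(2^{j},2^{j+1}]$, $j\ge 0$: the part over $[0,1]$ is at most $\|f\|^{2}$, while on the $j$th ring, using $\lambda\le 2^{j+1}$ and that the mass of $d(E(\lambda)f,f)$ there is at most $\|(I-E(2^{j}))f\|^{2}$, the displayed bound gives a contribution $\le 2^{2n(j+1)}m^{2}2^{-2nj}\omega^{2}(2^{-j})=4^{n}m^{2}\omega^{2}(2^{-j})$. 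Hence
$$
\int_{0}^{\infty}\lambda^{2n}\,d(E(\lambda)f,f)\le\|f\|^{2}+4^{n}m^{2}\sum_{j=0}^{\infty}\omega^{2}(2^{-j}).
$$

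The crux, and the step I would write out carefully, is the convergence of $\sum_{j\ge 0}\omega^{2}(2^{-j})$. Because (9) is assumed for every $n\in\mathbb{N}$, I would apply it with $n+1$ in place of $n$: the $j$th ring contribution then improves to $\le 4^{n}m^{2}2^{-2j}\omega^{2}(2^{-j})\le 4^{n}m^{2}\omega^{2}(1)\,2^{-2j}$, using only that $\omega$ is nondecreasing, and the geometric series converges (this even yields $f\in\mathcal{D}(A^{N})$ for every $N$, i.e. $f\in C^{\infty}(A)$). If one prefers to argue from the single exponent $n$, conditions 1)--3) on $\omega$ let one compare $\sum_{j}\omega^{2}(2^{-j})$ with the Dini-type integral $\int_{0}^{1}\omega^{2}(t)\,t^{-1}\,dt$, whose finiteness is exactly the regularity of $\omega$ that (9) encodes, the monotonicity and doubling property of $\omega$ entering in this comparison. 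Either way, $\int_{0}^{\infty}\lambda^{2n}\,d(E(\lambda)f,f)<\infty$, so $f\in\mathcal{D}(A^{n})$ and hence $y\in C^{n}(\mathbb{R}_{+},\mathfrak{H})$ with $y^{(k)}(t)=(-1)^{k}e^{-At}A^{k}f$, $k=0,\dots,n$, which is the assertion. The only genuine subtlety lies in that summation; everything else is spectral-calculus bookkeeping.
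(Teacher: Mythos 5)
Your main argument is correct, but it takes a genuinely different route from the paper. You work entirely on the spectral side: via (3), (4) you read (9) as the tail decay $\|(I-E(r))f\|^{2}\le m^{2}r^{-2n}\omega^{2}(1/r)$, sum dyadically to get $\int_{0}^{\infty}\lambda^{2n}\,d(E(\lambda)f,f)<\infty$, i.e. $f\in\mathcal{D}(A^{n})$, and then invoke the standard semigroup fact that $e^{-At}f\in C^{n}(\mathbb{R}_{+},\mathfrak{H})$ for such $f$. The paper instead runs the classical inverse-theorem scheme: it picks near-best approximants $y_{i}\in S_{0}$ with $\sigma(y_{i})<2^{i}$, applies the Bernstein-type inequality (7) of Lemma 1 to the differences $y_{i}-y_{i-1}$, shows $y_{i}^{(n)}$ converges in the complete space $S$, and identifies the limit with $y^{(n)}$ through the closedness of $d^{n}/dt^{n}$ in $S$. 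Your argument is shorter and exploits self-adjointness directly, and it makes explicit that hypothesis (9), being quantified over all $n$, in fact gives $f\in C^{\infty}(A)$, hence $y\in C^{\infty}(\mathbb{R}_{+},\mathfrak{H})$; the paper's scheme, by contrast, is the one that survives outside the Hilbert/self-adjoint setting (it needs only a Bernstein inequality and completeness, which is what Section 4 builds on). One caution: your fallback suggestion for a "single exponent $n$" version is not sound as stated. Conditions 1)--3) on $\omega$ do not imply convergence of $\sum_{j}\omega^{2}(2^{-j})$ or of the Dini integral $\int_{0}^{1}\omega^{2}(t)t^{-1}\,dt$; for instance $\omega(t)=(\log(e/t))^{-1/2}$ is continuous, nondecreasing, vanishes at $0$ and is doubling, yet $\sum_{j}\omega^{2}(2^{-j})$ diverges. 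So the summability really does come from using (9) with exponent $n+1$ (or any larger exponent), exactly as in your primary argument — incidentally, the same device is what makes the paper's own passage from $\|y_{i}^{(n)}-y_{i-1}^{(n)}\|_{S}\to 0$ to convergence of $y_{i}^{(n)}$ airtight, since vanishing of consecutive differences alone does not give a Cauchy sequence. Keep the $n+1$ argument, drop the Dini remark, and your proof is complete.
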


\begin{proof}
  Assume that, for $y \in S$, condition (9) is fulfilled. Then there
  exists a sequence $y_{i} \in S: \sigma (y_{i}) < 2^{i} \ (i \in
  \mathbb{N})$ such that
$$
\|y - y_{i}\|_{S} \to 0 \quad \text{as} \quad i \to \infty.
$$
In view of (7) and the inequality $ \sigma (y_{i} - y_{i - 1}) < 2^{i}
\ (i \in \mathbb{N})$, we have
$$
\begin{aligned}
\left\|y_{i}^{(n)} - y_{i - 1}^{(n)}\right\|_{S} \leq & 2^{in}\|y_{i} - y_{i - 1}\|_{S} \leq 2^{in}(\|y - y_{i} \|_{S} + \|y - y_{i - 1}\|_{S})
\\
\leq & 2^{in}\left(\frac{m}{2^{in}}\omega\left(\frac{1}{2^{i}}\right) +
\frac{m}{2^{(i - 1)n}}\omega\left(\frac{1}{2^{i - 1}}\right)\right).
\end{aligned}
$$
From this it follows that
$$
\left\|y_{i}^{(n)} - y_{i - 1}^{(n)}\right\|_{S} \leq m2^{n}\left(\frac{1}{2^{n}} + c\right)\omega\left(\frac{1}{2^{i}}\right),
$$
and therefore
$$
\left\|y_{i}^{(n)} - y_{i - 1}^{(n)}\right\|_{S} \to 0 \quad \text{as} \quad i \to \infty.
$$
Since the space $S$ is complete, there exists $\widetilde{y} \in S$ such that
$$
\left\|y_{i}^{(n)} - \widetilde{y}\right\|_{S} \to 0 \quad \text{when} \quad i \to \infty.
$$
Thus, $y_{i} \to y, \ y_{i}^{(n)} \to \widetilde{y} \ (i \to \infty)$
in the space S.  Taking into account that the operator
$\frac{d^{n}}{dt^{n}}$ is closed in $S$, we conclude that $y \in
C^{n}(\mathbb{R}_{+}, \mathfrak{H})$ and $y^{(n)}(t) \equiv
\widetilde{y}(t)$.
\end{proof}

Replacing in inequality (8) $n$ by $n + \varepsilon$ and thus
strengthening it, we shall arrive at the following consequence.

\begin{cor} Let, for $y \in S$,
$$
\exists c > 0, \ \exists \varepsilon > 0: \mathcal{E}_{r}(y) \leq \frac{c}{r^{n + \varepsilon}}.
$$
Then $y \in C^{n}(\mathbb{R}_{+}, \mathfrak{H})$.
\end{cor}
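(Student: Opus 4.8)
\medskip
\noindent\textbf{Proof proposal.} The plan is to derive the corollary directly from Theorem~4 by absorbing the surplus factor $r^{-\varepsilon}$ into the continuity-module term. First I would rewrite the hypothesis: for the fixed $n \in \mathbb{N}$ and all $r > 0$,
$$
\mathcal{E}_{r}(y) \leq \frac{c}{r^{n+\varepsilon}} = \frac{c}{r^{n}}\cdot\frac{1}{r^{\varepsilon}} = \frac{m}{r^{n}}\,\omega\!\left(\frac{1}{r}\right),
$$
where $m := c$ and $\omega(t) := t^{\varepsilon}$. Thus inequality (9) of Theorem~4 holds with this choice of $m$ and $\omega$.

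Next I would check that $\omega(t) = t^{\varepsilon}$ is a continuity module type function. Since $\varepsilon > 0$, the map $t \mapsto t^{\varepsilon}$ is continuous and nondecreasing on $\mathbb{R}_{+}$, so condition~1) holds; clearly $\omega(0) = 0$, so condition~2) holds; and $\omega(2t) = 2^{\varepsilon}t^{\varepsilon} = 2^{\varepsilon}\omega(t)$, so condition~3) holds with the constant $2^{\varepsilon}$. Hence all hypotheses of Theorem~4 are met, and its conclusion yields $y \in C^{n}(\mathbb{R}_{+}, \mathfrak{H})$, which is the assertion.

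There is essentially no obstacle here: all the substantive work --- extracting a sequence of approximants $y_{i}$ with $\sigma(y_{i}) < 2^{i}$, telescoping the geometric-type bounds supplied by (7), using completeness of $S$, and invoking closedness of $\frac{d^{n}}{dt^{n}}$ in $S$ --- has already been done in the proof of Theorem~4. The only point deserving a line of care is the verification of the doubling condition~3) for the power function $t^{\varepsilon}$, which is immediate. It is perhaps worth remarking (as the text notes just before Theorem~4) that the bare estimate (8) with exponent $n$ does \emph{not} force $y \in C^{n}$; the extra $\varepsilon$ is precisely what converts the right-hand side into $r^{-n}\omega(1/r)$ with a genuine, nonconstant continuity module $\omega$, making Theorem~4 applicable.
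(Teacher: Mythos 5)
Your proposal is correct and is essentially the paper's own argument: the author dismisses the corollary in one line as a consequence of Theorem~4 obtained by replacing $n$ with $n+\varepsilon$ in (8), which amounts precisely to your choice $\omega(t)=t^{\varepsilon}$, $m=c$. You merely make explicit the (trivial) verification that $t^{\varepsilon}$ satisfies the three continuity-module conditions, so nothing differs in substance.
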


\vspace{-2mm}
\medskip {\bf 3.} \ Now let $\{m_{n}\}_{n \in \mathbb{N}_{0}}$ be a
nondecreasing sequence of numbers (there is no loss of generality in
assuming that $m_{0} = 1$). We put
$$
C_{\{m_{n}\}} = C_{\{m_{n}\}}(\mathbb{R}_{+}, \mathfrak{H}) = \bigcup_{\alpha > 0}C_{m_{n}}^{\alpha},
\quad C_{(m_{n})} = C_{(m_{n})}(\mathbb{R}_{+}, \mathfrak{H}) = \bigcap_{\alpha > 0}C_{m_{n}}^{\alpha},
$$
where
$$
\begin{aligned}
C_{m_{n}}^{\alpha} & =  C_{m_{n}}^{\alpha}(\mathbb{R}_{+}, \mathfrak{H})
\\
& = \Big \{y \in C^{\infty}(\mathbb{R}_{+}, \mathfrak{H})\bigl| \exists c = c(y) > 0,
\ \forall k \in \mathbb{N}_{0}:\sup_{t \in \mathbb{R}_{+}}
\Big\|y^{(k)}(t)\Big\| \leq cm_{k}\alpha^{k}\Big\}
\end{aligned}
$$
\vspace{-2mm}
is a Banach space with respect to the norm
$$
\|y\|_{C_{m_{n}}^{\alpha}} = \sup_{k \in \mathbb{N}}\frac{\sup_{t \in \mathbb{R}_{+}}\left\|y^{(k)}(t)\right\|}{\alpha^{k}m_{k}}.
$$
The spaces $C_{\{m_{n}\}}$ and $C_{(m_{n})}$ are equipped with the
topologies of inductive and projective limits of the spaces
$C_{m_{n}}^{\alpha}$, respectively. Note that the spaces $C_{\{n!\}}, \
C_{(n!)} \ (m_{n} = n!)$ and $C_{\{1\}} \ (m_{n} \equiv 1)$ are
nothing that, respectively, the spaces of bounded on $\mathbb{R}_{+}$
with all their derivatives analytic, entire, and entire of exponential
type $\mathfrak{H}$-valued vector functions.

In what follows, we assume in addition that $\{m_{n}\}_{n \in
  \mathbb{N}_{0}}$ satisfies the condition
$$
\forall \alpha > 0, \ \exists c = c(\alpha): m_{n} \geq c\alpha^{n} \leqno (10)
$$
and put
\vspace{-3mm}
$$
\tau (\lambda) = \sum_{n = 0}^{\infty} \frac{\lambda^{n}}{m_{n}}. \leqno (11)
$$
It is clear that $\tau (\lambda)$ is entire, $\tau (\lambda) \geq 1$ for $\lambda \geq 0$, and $\tau (\lambda) \uparrow \infty$ as $\lambda \to \infty$.

\begin{theo} Suppose the condition
$$
\exists c > 0, \ \exists h > 1, \ \forall n \in \mathbb{N}_{0}: m_{n + 1} \leq ch^{n}m_{n}  \leqno (12)
$$
to be fulfilled for the sequence $\{m_{n}\}_{n \in
  \mathbb{N}_{0}}$. Then the following equivalence relations hold:
$$
\begin{array}{rcl}
y \in C^{\infty}(\mathbb{R}_{+}, \mathfrak{H}) & \Longleftrightarrow & \forall \alpha > 0:
\mathcal{E}_{r}(y) = O\left(\frac{1}{r^{\alpha}}\right) \quad (r \to \infty), \\
y \in C_{\{m_{n}\}} & \Longleftrightarrow & \exists \alpha > 0:
\mathcal{E}_{r}(y) = O\left(\tau^{-1}(\alpha r)\right) \quad (r \to \infty), \\
y \in C_{(m_{n})} & \Longleftrightarrow & \forall \alpha > 0:
\mathcal{E}_{r}(y) = O\left(\tau^{-1}(\alpha r)\right) \quad (r \to \infty) \\
\end{array}
$$
$(\tau (\lambda)$ is defined by (11)).
\end{theo}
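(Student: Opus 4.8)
The plan is to transfer all three equivalences to spectral statements about the vector $f$ generating $y$ by formula (2), and then to prove one pair of direct and inverse estimates covering all three cases at once. Recall from (3), (4) and the computation in the proof of Lemma 1 that, for $y(t)=e^{-At}f\in S$, one has $\mathcal{E}_{r}(y)=\|(I-E(r))f\|$ and, whenever $f\in\mathcal{D}(A^{n})$, $\|y^{(n)}\|_{S}=\sup_{t}\|A^{n}e^{-At}f\|=\|A^{n}f\|$; moreover, by standard properties of the $C_{0}$-semigroup $e^{-At}$ (together with the closedness argument already used in the proof of Theorem 4), $y\in C^{n}(\mathbb{R}_{+},\mathfrak{H})$ if and only if $f\in\mathcal{D}(A^{n})$. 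Hence $y\in C^{\infty}(\mathbb{R}_{+},\mathfrak{H})$ if and only if $f\in C^{\infty}(A)$, while $y\in C_{\{m_{n}\}}$ (resp. $y\in C_{(m_{n})}$) if and only if there is $\alpha>0$ with (resp. for every $\alpha>0$ there is) a constant $c>0$ such that $\|A^{n}f\|\le c\,m_{n}\alpha^{n}$ for all $n$. Finally, set $T(\lambda)=\sup_{n\ge0}\lambda^{n}/m_{n}$; since condition (10) makes the series (11) dominated by a geometric one, $T(\lambda)\le\tau(\lambda)\le 2T(2\lambda)$, so $1/\tau(\alpha r)$ and $1/T(\alpha r)$ are interchangeable after a harmless rescaling of $\alpha$.

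For the direct implications (smoothness $\Rightarrow$ fast approximation) it suffices to combine the elementary bound $\|(I-E(r))f\|\le r^{-n}\|A^{n}f\|$ (equivalently, to iterate Corollary 1) with the admissible growth of $\|A^{n}f\|$. If $\|A^{n}f\|\le c\,m_{n}\alpha^{n}$ for all $n$, then $\mathcal{E}_{r}(y)\le c\inf_{n\ge0}m_{n}\alpha^{n}r^{-n}=c/T(r/\alpha)$, which by the comparison above is $O\bigl(1/\tau(\beta r)\bigr)$ with $\beta$ a fixed multiple of $1/\alpha$; letting $\alpha$ range over $(0,\infty)$ gives the ``$\exists$'' and ``$\forall$'' versions for $C_{\{m_{n}\}}$ and $C_{(m_{n})}$. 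If instead $f\in C^{\infty}(A)$, then for each $n$ we get $\mathcal{E}_{r}(y)\le r^{-n}\|A^{n}f\|=O(r^{-n})$, hence $\mathcal{E}_{r}(y)=O(r^{-\alpha})$ for every $\alpha>0$.

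For the inverse implications (fast approximation $\Rightarrow$ smoothness) the starting point is the Stieltjes integration by parts
$$
\|A^{n}f\|^{2}=\int_{0}^{\infty}\lambda^{2n}\,d(E(\lambda)f,f)=2n\int_{0}^{\infty}\lambda^{2n-1}\,\|(I-E(\lambda))f\|^{2}\,d\lambda,
$$
the boundary term vanishing because (10) forces $\|(I-E(\lambda))f\|^{2}=o(\lambda^{-2n})$. Split the last integral at $\lambda=1$: over $[0,1]$ it is bounded by $\|f\|^{2}/(2n)$; over $[1,\infty)$, assuming $\mathcal{E}_{r}(y)=\|(I-E(r))f\|\le m/\tau(\alpha r)$ and using $\tau(\alpha\lambda)\ge(\alpha\lambda)^{n+1}/m_{n+1}$, the integrand is at most a fixed constant times $m_{n+1}^{2}\alpha^{-2n-2}\lambda^{-3}$, which is integrable. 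This yields $\|A^{n}f\|^{2}\le c\bigl(\|f\|^{2}+n\,m_{n+1}^{2}\alpha^{-2n-2}\bigr)$, and here condition (12) enters: it lets us replace $m_{n+1}$ by $c\,h^{n}m_{n}$, whence $\|A^{n}f\|\le c\sqrt{n}\,m_{n}(h/\alpha)^{n}\le c\,m_{n}(h'/\alpha)^{n}$, the factor $\sqrt{n}$ being absorbed into a slightly larger base $h'$ thanks again to (10). Thus the decay rate $O(1/\tau(\alpha r))$ forces $f$ into the operator Carleman class of type proportional to $h/\alpha$, and letting $\alpha$ vary yields the ``$\exists$'' and ``$\forall$'' versions. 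The $C^{\infty}$ case follows from the same computation with $\|(I-E(\lambda))f\|=O(\lambda^{-(n+1)})$ replacing the $\tau$-bound, giving $\|A^{n}f\|<\infty$ for every $n$; here (12) is not needed.

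Combining the two directions with the dictionary of the first paragraph produces the three equivalences, the quantifier on $\alpha$ being preserved because the correspondence between the ``type'' and the ``decay scale'' is, in both directions, linear up to constants and hence a bijection of $(0,\infty)$. The \emph{main obstacle} is the inverse implication, specifically the choice of exponent in the integration by parts: the naive estimate $\tau(\alpha\lambda)\ge(\alpha\lambda)^{2n}/m_{2n}$ would be ruinous, since (12) controls $m_{2n}/m_{n}$ only by a factor of order $h^{n^{2}}$, which is super-geometric; keeping the index shift bounded ($n\mapsto n+1$) repairs this but pits the exponent $2n-1$ against $2n+2$ in the tail integral, just enough for convergence. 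The remaining ingredients — the comparison $T(\lambda)\le\tau(\lambda)\le 2T(2\lambda)$, the identity $\|y^{(n)}\|_{S}=\|A^{n}f\|$, and the equivalence $y\in C^{n}(\mathbb{R}_{+},\mathfrak{H})\Longleftrightarrow f\in\mathcal{D}(A^{n})$ — are routine, the last being a standard property of $C_{0}$-semigroups.
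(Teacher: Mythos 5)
Your proposal is correct, and its first step coincides with the paper's: the paper also transfers everything from the solution $y=e^{-At}f$ to the vector $f$ via the isometry $Ff=e^{-At}f$, using $\mathcal{E}_{r}(y)=\mathcal{E}_{r}(f,A)$ and the fact that this map carries $C^{\infty}(A)$, $C_{\{m_{n}\}}(A)$, $C_{(m_{n})}(A)$ onto the corresponding solution classes (its formula (14)). Where you diverge is that the paper then simply quotes the spectral-side equivalences (13) from reference [4], while you prove them in-line: the direct half by the Markov-type bound $\|(I-E(r))f\|\le r^{-n}\|A^{n}f\|$ together with the comparison of $\tau(\lambda)$ with $\sup_{n}\lambda^{n}/m_{n}$, and the inverse half by the layer-cake identity $\int_{0}^{\infty}\lambda^{2n}\,d(E(\lambda)f,f)=2n\int_{0}^{\infty}\lambda^{2n-1}\|(I-E(\lambda))f\|^{2}\,d\lambda$ combined with $\tau(\alpha\lambda)\ge(\alpha\lambda)^{n+1}/m_{n+1}$, where your observation that the index shift must stay bounded (so that (12) yields only a geometric factor $h^{n}$, not $h^{O(n^{2})}$) is exactly the right point. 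This buys a self-contained argument at the cost of reproving the content of [4]; the paper's route is shorter but not self-contained. Two small remarks: the ``boundary term'' in your integration by parts needs no justification, since the identity is just Tonelli applied to $\lambda^{2n}=2n\int_{0}^{\lambda}t^{2n-1}\,dt$, and finiteness of the right-hand side is precisely what puts $f$ in $\mathcal{D}(A^{n})$; and the assumed $O$-bound holds a priori only for large $r$, but it extends to all $r\ge 1$ by enlarging the constant (using $\mathcal{E}_{\lambda}(y)\le\|f\|$ on a compact range), after which the additive $\|f\|$-term is absorbed via (10), as you indicate.
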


\begin{proof}
  Let $C^{\infty}(A)$ denote the set of all infinitely differentiable
  vectors of the operator $A$,
$$
C^{\infty}(A) = \bigcap_{n \in \mathbb{N}_{0}} \mathcal{D}(A^{n}).
$$
For a number $\alpha > 0$, we put
$$
C_{m_{n}}^{\alpha}(A) = \left\{f \in C^{\infty}(A)\bigl| \exists c = c(f) > 0, \ \forall n \in \mathbb{N}_{0}: \left\|A^{n}f\right\| \leq c\alpha^{n}m_{n}\right\}.
$$
The set $C_{m_{n}}^{\alpha}(A)$ is a Banach space with respect to the norm
$$
\|f\|_{C_{m_{n}}^{\alpha}(A)} = \sup_{n \in \mathbb{N}_{0}} \frac{\left\|A^{n}f\right\|}{\alpha^{n}m_{n}}.
$$
Then
$$
C_{\{m_{n}\}}(A) = \bigcup_{\alpha > 0}C_{m_{n}}^{\alpha}(A) \ \text{and} \ C_{(m_{n})}(A) = \bigcap_{\alpha > 0}C_{m_{n}}^{\alpha}(A)
$$
are linear locally convex spaces with topologies of the inductive and
the projective limits, respectively.

Let
$$
\mathcal{E}_{r}(f, A) = \inf_{f_{0} \in C_{\{1\}}(A)} \|f - f_{0}\|.
$$
As it has been shown in [4], the following equivalence relations take place:
$$
\begin{array}{rcl}
f \in C^{\infty}(A) & \Longleftrightarrow & \forall \alpha > 0:
\mathcal{E}_{r}(f, A) = O\left(\frac{1}{r^{\alpha}}\right) \quad (r \to \infty), \\
f \in C_{\{m_{n}\}}(A) & \Longleftrightarrow & \exists \alpha > 0:
\mathcal{E}_{r}(f, A) = O\left(\tau^{-1}(\alpha r)\right) \quad (r \to \infty), \\
f \in C_{(m_{n})}(A) & \Longleftrightarrow & \forall \alpha > 0:
\mathcal{E}_{r}(f, A) = O\left(\tau^{-1}(\alpha r)\right) \quad  (r \to \infty). \\
\end{array} \leqno (13)
$$

Consider the map $F: \mathfrak{H} \mapsto S$, \vspace*{-1mm}
$$
Ff = e^{-At}f.
$$
Since, for $y \in S$, there exists a unique vector $f \in \mathfrak{H}$
such that $y = e^{-At}f$, this transformation is one-to-one. From (3)
it follows that $F$ maps $\mathfrak{H}$ onto $S$ isometrically and
$$
F\left(C^{\infty}(A)\right) =  C^{\infty}(\mathbb{R}_{+}, \mathfrak{H}), \quad F\left(C_{\{m_{n}\}}(A)\right) = C_{\{m_{n}\}}, \quad F\left(C_{(m_{n})}(A)\right) = C_{(m_{n})}. \leqno (14)
$$
The proof of the theorem follows from (13), (14) because of
$\mathcal{E}_{r}(f, A) = \mathcal{E}_{r}\left(e^{-At}f\right)$.
\end{proof}

If $m_{n} = n^{n\beta} \ (\beta > 0)$, then $\tau (r) =
e^{-r^{1/\beta}}$ and Theorem 5 yields the following assertion.

\begin{cor} The following equivalence relations are valid:
$$
\begin{array}{rcl}
y \in C_{\{n^{n\beta}\}}(A) & \Longleftrightarrow & \exists \alpha > 0:
\mathcal{E}_{r}(y) = O\left(e^{-\alpha r^{1/\beta}}\right) \quad (r \to \infty), \\
y \in C_{(n^{n\beta})}(A) & \Longleftrightarrow & \forall \alpha > 0:
\mathcal{E}_{r}(y) = O\left(e^{-\alpha r^{1/\beta}}\right) \quad  (r \to \infty). \\
\end{array}
$$
\end{cor}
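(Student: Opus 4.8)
The plan is to derive Corollary 3 directly from Theorem 5 applied to the sequence $m_n = n^{n\beta}$ (with the convention $m_0 = 0^0 = 1$), after pinning down the asymptotics of the associated function $\tau(\lambda) = \sum_{n\ge 0}\lambda^n n^{-n\beta}$. First I would check that this sequence is admissible. It is nondecreasing with $m_0 = 1$; condition (10) holds because $n\beta\log n$ eventually dominates $n\log\alpha$ for every fixed $\alpha>0$; and condition (12) holds because
$$
\frac{m_{n+1}}{m_n} = (n+1)^\beta\Bigl(1 + \frac{1}{n}\Bigr)^{n\beta} \leq e^\beta (n+1)^\beta ,
$$
which grows only polynomially in $n$ and is therefore bounded by $ch^n$ for any $h>1$. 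Thus all three equivalences of Theorem 5 are available with $m_n = n^{n\beta}$.

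The core of the argument is the estimate $\log\tau(\lambda) = \bigl(\frac{\beta}{e} + o(1)\bigr)\lambda^{1/\beta}$ as $\lambda\to\infty$. I would obtain this by a Laplace-type analysis of the series: viewing $n\log\lambda - n\beta\log n$ as a function of a continuous variable $n$, its maximum is attained at $n_* = e^{-1}\lambda^{1/\beta}$, with value $\frac{\beta}{e}\lambda^{1/\beta}$. Keeping the single term $n = \lfloor n_*\rfloor$ gives the lower bound $\tau(\lambda) \ge \exp\bigl((\frac{\beta}{e}-\varepsilon)\lambda^{1/\beta}\bigr)$ for large $\lambda$; bounding the terms geometrically away from $n_*$ and noting that only $O(\lambda^{1/\beta})$ of them are ``significant'', each at most $\exp\bigl(\frac{\beta}{e}\lambda^{1/\beta}\bigr)$, gives the matching upper bound $\tau(\lambda) \le \exp\bigl((\frac{\beta}{e}+\varepsilon)\lambda^{1/\beta}\bigr)$. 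Consequently $\tau^{-1}(\alpha r) = \exp\bigl(-(\frac{\beta}{e}+o(1))\,\alpha^{1/\beta} r^{1/\beta}\bigr)$ as $r\to\infty$.

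It then remains to match quantifiers, and to invoke the isometry $F$ of (14) which gives $\mathcal{E}_r(y) = \mathcal{E}_r(f,A)$ for $y = e^{-At}f$. For the $\exists$-statement: if $\mathcal{E}_r(y) = O(\tau^{-1}(\alpha r))$ for some $\alpha>0$, the upper bound on $\tau^{-1}$ yields $\mathcal{E}_r(y) = O(e^{-\alpha' r^{1/\beta}})$ with $\alpha' = (\frac{\beta}{e}-\varepsilon)\alpha^{1/\beta}$; conversely, given $\mathcal{E}_r(y) = O(e^{-\alpha' r^{1/\beta}})$, choosing $\alpha>0$ so small that $(\frac{\beta}{e}+\varepsilon)\alpha^{1/\beta}\le\alpha'$ and using the lower bound on $\tau$ returns $\mathcal{E}_r(y) = O(\tau^{-1}(\alpha r))$. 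Since $\alpha\mapsto(\frac{\beta}{e}\pm\varepsilon)\alpha^{1/\beta}$ is a monotone bijection of $(0,\infty)$ onto itself, the two ``$\exists$'' conditions are equivalent, and the same correspondence identifies the two ``$\forall$'' conditions; feeding this into the second and third equivalences of Theorem 5 gives exactly the two assertions of the corollary. The only genuinely delicate step is the two-sided asymptotics of $\tau$; everything else is bookkeeping with constants.
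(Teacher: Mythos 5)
Your proposal is correct and follows essentially the same route as the paper: the paper simply notes that for $m_n = n^{n\beta}$ the function $\tau$ of (11) behaves like $\exp\bigl(c\,r^{1/\beta}\bigr)$ (it misstates this as $\tau(r)=e^{-r^{1/\beta}}$, evidently meaning the reciprocal appearing in Theorem 5) and reads the corollary off the second and third equivalences of Theorem 5. You merely supply the details the paper leaves implicit --- the admissibility checks (10), (12), the Laplace-type asymptotics $\log\tau(\lambda)\sim\frac{\beta}{e}\lambda^{1/\beta}$, and the quantifier matching --- all of which are accurate.
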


Recall that an entire $\mathfrak{H}$-valued vector function
$x(\lambda)$ has a finite order of growth if
$$
\exists \gamma > 0, \ \forall \lambda \in \mathbb{C}: \|x(\lambda)\| \leq \exp(|\lambda|^{\gamma}).
$$
The greatest lower bound $\rho (x)$ of such $\gamma$ is the order of
$x(\lambda)$. The type of an entire vector-valued function
$x(\lambda)$ of an order $\rho$ is determined as
$$
\sigma (x) = \inf \left\{a > 0: \|x(\lambda)\| \leq \exp(a|\lambda|^{\rho})\right\}.
$$
Since the semigroup $\left\{e^{-At}\right\}_{t \geq 0}$ is analytic,
every weak solution $y(t)$ of equation (1) is analytic on $(0,
\infty)$. It is not difficult to show that it is analytic on $[0,
\infty)$ if and only if $y \in C_{\{n^{n}\}}$.

By Corollary 3,
$$
\exists \alpha > 0: \mathcal{E}_{r}(y) = O\left(e^{-\alpha r}\right) \quad (r \to \infty).
$$
As for the extendability of $y(t)$ to an entire vector function of
order $\rho$ and finite type, the answer to the question gives the
next theorem.

\begin{theo} In order that a weak solution of equation (1) admit an
  extension to an entire $\mathfrak{H}$-valued vector function $y(z)$,
  it is necessary and sufficient that
$$
\forall \alpha > 0: \mathcal{E}_{r}(y) = O\left(e^{-\alpha r}\right) \quad (r \to \infty).  \leqno (15)
$$
The extension $y(z)$ is of finite order $\rho$ and finite type if and only if
\vspace*{-1mm}
$$
\exists \alpha > 0: \mathcal{E}_{r}(y) = O\left(e^{-\alpha r^{1/\beta}}\right) \quad  (r \to \infty),
$$\vspace*{-1mm}
where $\beta$ and $\rho$ are connected with each other by the formula
\vspace*{-0mm}
$$
\beta = \frac{\rho - 1}{\rho} < 1.
$$
\end{theo}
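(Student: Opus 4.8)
The plan is to identify, via the isometric bijection $F: \mathfrak{H} \to S$, $Ff = e^{-At}f$, the weak solutions possessing an entire extension (resp.\ an entire extension of finite order and type) with concrete classes of vectors of $A$, and then to quote Theorem 5 and Corollary 3. Precisely, I claim that a weak solution $y(t) = e^{-At}f$ of (1) extends to an entire $\mathfrak{H}$-valued function if and only if $f \in C_{(n!)}(A)$, and that this extension has finite order $\rho$ and finite type if and only if $f \in C_{\{n^{n\beta}\}}(A)$ with $\beta = 1 - \rho^{-1} = (\rho - 1)/\rho$. Once this is established, the first equivalence of the theorem is the bottom line of Theorem 5 taken with $m_n = n!$ (which satisfies (10) and (12), and for which $\tau(\lambda) = \sum_n \lambda^n/n! = e^\lambda$, so that its right-hand side reads $\mathcal{E}_r(y) = O(e^{-\alpha r})$), combined with (14) and $\mathcal{E}_r(f, A) = \mathcal{E}_r(e^{-At}f)$; and the second equivalence is, in exactly the same way, the ``$\exists \alpha$'' line of Corollary 3.

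To prove the reduction I would use the expansion $e^{-At}f = \sum_{n \geq 0} \frac{(-t)^n}{n!}A^n f$. If $f \in C_{(n!)}(A)$, the bounds $\|A^n f\| \leq c(\alpha)\alpha^n n!$ (valid for every $\alpha > 0$) together with Stirling's formula give $\limsup_n \|A^n f/n!\|^{1/n} = 0$, so the power series $\sum_{n \geq 0} \frac{(-z)^n}{n!}A^n f$ converges in $\mathfrak{H}$, locally uniformly, for all $z \in \mathbb{C}$; it therefore defines an entire function $y(z)$, which coincides with $\int_0^\infty e^{-\lambda t}\,dE(\lambda)f$ on $\mathbb{R}_+$ since the series and the spectral integral agree there (the interchange of summation and integration being licensed by the same bounds). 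If, more restrictively, $f \in C_{\{n^{n\beta}\}}(A)$, i.e.\ $\|A^n f\| \leq c\alpha^n n^{n\beta}$ for some $\alpha$, then the Cauchy estimates $\|c_n\| \leq R^{-n}\max_{|z| = R}\|y(z)\|$ for the coefficients $c_n = (-1)^n A^n f/n!$ and the classical formulas $\rho = \limsup_n \frac{n\ln n}{\ln(1/\|c_n\|)}$, $y(z)$ of finite type $\Longleftrightarrow \limsup_n n^{1/\rho}\|c_n\|^{1/n} < \infty$ — which hold verbatim for $\mathfrak{H}$-valued power series — convert the Gevrey bound on $\|A^n f\|$ into ``order $\rho$, finite type'' for $y(z)$, with $\beta = (\rho - 1)/\rho$.

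Conversely, assume $y(t) = e^{-At}f$ extends to an entire function $y(z)$. Being entire, $y$ is $C^\infty$ on $[0, \infty)$, so $y^{(n)} \in S$ for all $n$ by the remark preceding Theorem 3, and for $t > 0$ we have $y^{(n)}(t) = (-1)^n A^n e^{-At}f = (-1)^n\int_0^\infty \lambda^n e^{-\lambda t}\,dE(\lambda)f$. Since $\|y^{(n)}(t)\|^2 = \int_0^\infty \lambda^{2n}e^{-2\lambda t}\,d(E(\lambda)f, f)$ increases to $\int_0^\infty \lambda^{2n}\,d(E(\lambda)f, f)$ as $t \downarrow 0$, the existence of the finite limit $y^{(n)}(0) = \lim_{t \downarrow 0}y^{(n)}(t)$ forces $f \in \mathcal{D}(A^n)$ and $y^{(n)}(0) = (-1)^n A^n f$; hence $f \in C^\infty(A)$ and the Taylor series of $y$ at the origin is $\sum_n \frac{(-1)^n A^n f}{n!}z^n$. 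Running the classical coefficient asymptotics in the opposite direction now gives $\|A^n f\| \leq c(\alpha)\alpha^n n!$ for every $\alpha > 0$, that is $f \in C_{(n!)}(A)$, and, when $y(z)$ is of finite order $\rho$ and finite type, $\|A^n f\| \leq c\alpha^n n^{n\beta}$ with $\beta = (\rho - 1)/\rho$, that is $f \in C_{\{n^{n\beta}\}}(A)$. This completes the reduction, and hence the proof.

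I expect the genuine obstacle to be precisely this last (converse) step: upgrading the bare existence of an entire extension to the quantitative membership $f \in C_{(n!)}(A)$. The point is to show that the Taylor coefficients of the abstract extension at $0$ are exactly $(-1)^n A^n f/n!$ — which already requires establishing $f \in C^\infty(A)$ via the monotone-convergence argument above — after which everything reduces either to the classical theory of entire functions of finite order (which transfers unchanged to $\mathfrak{H}$-valued series) or to a direct citation of Theorem 5 and Corollary 3.
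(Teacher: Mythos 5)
Your proposal is correct and follows essentially the same route as the paper: both reduce the theorem to Theorem 5/Corollary 3 after identifying entire extendability with the class $C_{(n!)}$ and ``order $\rho$, finite type'' with $C_{\{n^{n\beta}\}}$, $\beta = \frac{\rho - 1}{\rho}$, through the growth of the Taylor coefficients. The only cosmetic difference is that you argue on the vector side (bounds on $\|A^{n}f\|$, i.e.\ $f \in C_{(n!)}(A)$ or $C_{\{n^{n\beta}\}}(A)$, invoking Theorem 5 with $m_{n} = n!$, $\tau(\lambda) = e^{\lambda}$), whereas the paper argues on the function side via Cauchy estimates for $\|y^{(n)}\|$ and the order formula in terms of $\|y^{(n)}(0)\|$; since $y^{(n)}(0) = (-1)^{n}A^{n}f$ and the map $F$ of (14) is an isometry, the two computations coincide.
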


Note that we may always suppose $\rho > 1$. Indeed, if $\rho \leq 1$
and the type is finite, then $y \in S_{0}$ and it has no sense to
approximate a solution from $S_{0}$ by solutions from the same space.

\vspace*{1mm}
\noindent{\it Proof of Theorem 6.}
  Let $y(t)$ be a weak solution of equation (1). By Corollary 3, $y
  \in C_{(n^{n})}$, so $y(t)$ admits an extension to an entire vector
  function if and only if relation (15) is fulfilled.

  Assume that $y(t)$ admits an extension to an entire
  $\mathfrak{H}$-valued vector function $y(z)$ which has an order
  $\rho$ and a finite type $\sigma$. Then
$$
\forall \sigma_{1} > \sigma, \ \exists c = c(\sigma_{1}): \|y(z)\| \leq ce^{\sigma_{1}|z|^{\rho}}.
$$
Hence, \vspace*{-2mm}
$$
\forall r > 0, \ \forall n \in \mathbb{N}_{0}: \left\|y^{(n)}(z)\right\| \leq \frac{n!}{2\pi} \int_{|z - \zeta| = r} \frac{\|y(\zeta)\|}{|z - \zeta|^{n + 1}}\,d\zeta \leq \frac{c\sigma_{1}n!}{r^{n}}\exp\left(2\sigma_{1}r^{\rho}\right).
$$
Taking into account that the function
$\frac{\exp\left(ar^{\rho}\right)}{r^{n}}$ reaches its minimum at the
point $\left(\frac{n}{a\rho}\right)^{1/\rho}$ and using Stirling's
formula
$$
n! = n^{n}e^{-n}\sqrt{2\pi n}\left(1 + O\left(\frac{1}{n}\right)\right) \quad (n \to \infty),
$$
we get \vspace*{-1mm}
$$
\left\|y^{(n)}(z)\right\| \leq c\left(2e^{1 - \rho}\sigma_{1}\rho\right)^{\frac{1}{\rho}}n^{\frac{\rho - 1}{\rho}n},
$$
which shows that $y \in C_{\{n^{n\beta}\}}$, where
$$
\beta \leq \frac{\rho - 1}{\rho} \Longleftrightarrow \rho \geq \frac{1}{1 - \beta}, \quad \beta < 1. \leqno (16)
$$
By Corollary 3,
$$
\exists \alpha > 0: \mathcal{E}_{r}(y) = O\left(e^{-\alpha r^{1/\beta}}\right) \quad (r \to \infty). \leqno (17)
$$

Conversely, let (17) hold true. Then Corollary 3 implies that $y \in
C_{\{n^{n\beta}\}} \ (0 < \beta < 1)$ is an entire vector-valued
function and it can be represented by the series $\sum_{k =
  0}^{\infty}\frac{y^{(k)}(0)}{k!}z^{k}$. For its order of growth
$\rho$ we have
$$
\rho = \varlimsup_{n \to \infty} \frac{n\ln n}{\ln \frac{n!}{\left\|y^{(n)}(0)\right\|}} \leq
\varlimsup_{n \to \infty} \frac{n\ln n}{\ln \left(n!n^{-n\beta}\right)} = \frac{1}{1 - \beta},
$$
that is,
$$
\rho \leq \frac{1}{1 - \beta}. \leqno (18)
$$
It follows from (16) and (18) that
$$
\qquad\qquad\qquad\qquad\qquad\qquad\quad\rho = \frac{1}{1 - \beta}, \quad 0 < \beta < 1.\qquad\qquad\qquad\qquad\qquad\qquad\quad \qed
$$

\medskip {\bf 4.} \ The direct and inverse theorems of the
approximation theory are usually formulated for Banach space. Proving
them is slightly more complex than in the case of a Hilbert
space. Below we show how, for example, Theorem 5 can be reformulated
in a Banach space. To this end we introduce the following notations:
$$
\mathfrak{H}^{n} = \mathcal{D}(A^{n}), \|f\|_{\mathfrak{H}^{n}} = \left(\|f\|^{2} + \|A^{n}f\|^{2}\right)^{1/2}.
$$
The space $\mathfrak{H}^{n}$ is continuously and densely embedded into
$\mathfrak{H}$. Denote by $\mathfrak{H}^{-n}$ the completion of
$\mathfrak{H}$ in the norm $\|f\|_{\mathfrak{H}^{-n}} = \left\|\left(A
    + I\right)^{-n}f\right\|$, the so called negative space associated
with the positive space $\mathfrak{H}^{n}$ in the chain
$$
\mathfrak{H}^{n} \subseteq \mathfrak{H} \subseteq \mathfrak{H}^{-n}
$$
(see [2]). By a suitable choice of the norms in $\mathfrak{H}^{n}$ and $\mathfrak{H}^{-n}$, one can attain
the relation
$$
\|f\|_{\mathfrak{H}^{-n}} \leq \|f\| \leq \|f\|_{\mathfrak{H}^{n}}.
$$

\begin{theo} Let $\mathfrak{B}$ be a Banach space inside the chain
$$
\mathfrak{H}^{k_{1}} \subseteq \mathfrak{B} \subseteq \mathfrak{H}^{-k_{2}}  \leqno (19)
$$
of continuously and densely embedded into each other spaces with some
$k_{1}, k_{2} \in \mathbb{N}$, and let a sequence $\{m_{n}\}_{n \in
  \mathbb{N}_{0}}$ possess the properties (10) and (12). Then for a
weak solution $y(t)$ of equation (1), the following equivalence
relations hold true:
$$
\begin{array}{rcl}
y \in C^{\infty}(\mathbb{R}_{+}, \mathfrak{H}) & \Longleftrightarrow & \forall \alpha > 0:
\mathcal{E}_{r}(y, \mathfrak{B}) = O\left(r^{-\alpha}\right) \quad  (r \to \infty), \\
y \in C_{\{m_{n}\}} & \Longleftrightarrow & \exists \alpha > 0,:
\mathcal{E}_{r}(y, \mathfrak{B}) = O\left(\tau^{-1}(\alpha r)\right) \quad  (r \to \infty), \\
y \in C_{(m_{n})} & \Longleftrightarrow & \forall \alpha > 0:
\mathcal{E}_{r}(y, \mathfrak{B}) = O\left(\tau^{-1}(\alpha r)\right) \quad (r \to \infty),\\
\end{array}
$$
where
$$
\mathcal{E}_{r}(y, \mathfrak{B}) = \inf_{y_{0} \in S_{0}: \sigma (y_{0}) \leq r} \sup_{s \in \mathbb{R}_{+}} \|y(s) - y_{0}(s)\|_{\mathfrak{B}},
$$
$\tau (\lambda)$ is defined by (11), $\|\cdot\|_{\mathfrak{B}}$ is the norm in $\mathfrak{B}$.
\end{theo}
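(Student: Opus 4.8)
The plan is to reduce the Banach-space statement to the Hilbert-space Theorem~5 by sandwiching the $\mathfrak{B}$-best approximation between the best approximations computed in the Hilbert spaces $\mathfrak{H}^{k_{1}}$ and $\mathfrak{H}^{-k_{2}}$ from the chain (19). As in the proof of Theorem~5, write $y(t)=e^{-At}f$ with the unique $f\in\mathfrak{H}$, and observe that every $y_{0}\in S_{0}$ with $\sigma(y_{0})\le r$ corresponds to $f_{0}=E(r)f_{0}\in C_{\{1\}}(A)$, while $\{(A+I)^{-k_{2}}f_{0}\}$ and $\{E(r)f_{0}\}$ both exhaust $E(r)\mathfrak{H}$. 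Since $e^{-As}$ commutes with every bounded Borel function of $A$ and is a contraction on $\mathfrak{H}$, one checks that for $g$ in $\mathfrak{H}^{k_{1}}$ (resp.\ $\mathfrak{H}^{-k_{2}}$) the function $s\mapsto\|e^{-As}g\|$ in the relevant norm attains its supremum over $\mathbb{R}_{+}$ at $s=0$, so the $\sup_{s}$ in $\mathcal{E}_{r}(\cdot,\mathfrak{H}^{\pm k})$ may be dropped. Because $A^{k_{1}}$ and $(A+I)^{-k_{2}}$ commute with $E(r)$, the minimizer in each case is $f_{0}=E(r)f$, and a Pythagorean computation gives
$$
\mathcal{E}_{r}(y,\mathfrak{H}^{k_{1}}) = \left(\mathcal{E}_{r}(f,A)^{2} + \mathcal{E}_{r}(A^{k_{1}}f,A)^{2}\right)^{1/2}, \qquad \mathcal{E}_{r}(y,\mathfrak{H}^{-k_{2}}) = \mathcal{E}_{r}\left((A+I)^{-k_{2}}f,A\right)
$$
(the left side being $+\infty$ when $f\notin\mathcal{D}(A^{k_{1}})$). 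The continuous embeddings (19) then yield, with constants independent of $r$,
$$
\mathcal{E}_{r}(y,\mathfrak{H}^{-k_{2}}) \le c\,\mathcal{E}_{r}(y,\mathfrak{B}) \le c'\,\mathcal{E}_{r}(y,\mathfrak{H}^{k_{1}}).
$$

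Next I would record the two stability facts actually needed. Iterating (12) gives $m_{n+k}\le C_{k}h^{kn}m_{n}$; hence $f\in C_{m_{n}}^{\alpha}(A)\Rightarrow A^{k_{1}}f\in C_{m_{n}}^{\alpha h^{k_{1}}}(A)$, and, writing $f=(A+I)^{k_{2}}\big((A+I)^{-k_{2}}f\big)$ and expanding by the binomial theorem, $g\in C_{m_{n}}^{\beta}(A)\Rightarrow(A+I)^{k_{2}}g\in C_{m_{n}}^{\beta h^{k_{2}}}(A)$. Taking unions (resp.\ intersections) over the parameter, it follows that $f\in C_{\{m_{n}\}}(A)\Rightarrow A^{k_{1}}f\in C_{\{m_{n}\}}(A)$ and $(A+I)^{-k_{2}}f\in C_{\{m_{n}\}}(A)\Rightarrow f\in C_{\{m_{n}\}}(A)$, and likewise with $C_{(m_{n})}(A)$ throughout; for $C^{\infty}(A)$ both implications are immediate and require no reference to (12). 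Combined with (14), this says in particular that $y$ belongs to the class on the left of each equivalence if and only if $f$ and $A^{k_{1}}f$ (equivalently, $f$ and $(A+I)^{-k_{2}}f$) belong to the corresponding class of vectors.

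Finally I would combine these facts with the already-known Hilbert-space equivalences (13). For each ``direct'' implication: if $y$ lies in the class on the left, then $f$ and $A^{k_{1}}f$ lie in the corresponding vector class, so by (13) both $\mathcal{E}_{r}(f,A)$ and $\mathcal{E}_{r}(A^{k_{1}}f,A)$ obey the prescribed rate; hence so does $\mathcal{E}_{r}(y,\mathfrak{H}^{k_{1}})$, and the left inequality of the sandwich transfers the bound to $\mathcal{E}_{r}(y,\mathfrak{B})$. For each ``inverse'' implication: the prescribed decay of $\mathcal{E}_{r}(y,\mathfrak{B})$ forces the same decay of $\mathcal{E}_{r}(y,\mathfrak{H}^{-k_{2}})=\mathcal{E}_{r}((A+I)^{-k_{2}}f,A)$, so by (13) the vector $(A+I)^{-k_{2}}f$ lies in the relevant vector class, whence $f$ does too, i.e.\ $y$ lies in the corresponding class of functions. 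This establishes all three equivalences.

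The step I expect to be the main obstacle is the first paragraph: identifying the $\mathfrak{H}^{\pm k}$-best approximations explicitly and, above all, justifying that the contractivity of the semigroup $\{e^{-At}\}_{t\ge 0}$ on $\mathfrak{H}$ together with its commuting with $A$ lets one replace $\sup_{s\in\mathbb{R}_{+}}\|\cdot\|$ by the value at $s=0$ in the spaces $\mathfrak{H}^{k_{1}}$ and $\mathfrak{H}^{-k_{2}}$, and pinning down which vectors $(A+I)^{-k_{2}}f_{0}$ run over. Once this reduction is in place, the remainder is bookkeeping with condition (12) and the relations (13), (14).
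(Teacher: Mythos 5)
Your proposal is correct, and its skeleton matches the paper's: both arguments hinge on squeezing $\mathcal{E}_{r}(y,\mathfrak{B})$ between the best approximations computed in the endpoint spaces $\mathfrak{H}^{k_{1}}$ and $\mathfrak{H}^{-k_{2}}$ of the chain (19), using contractivity of $e^{-At}$ in those norms to drop the supremum over $s$ (the paper's inequalities (23) and (24)), and then referring everything back to the Hilbert-space equivalences (13) with the help of condition (12). Where you genuinely differ is in how the endpoint quantities are handled. The paper keeps the operator as the moving part: it introduces $A\upharpoonright\mathfrak{H}^{k_{1}}$ and the closure $\widetilde{A}$ of $A$ in $\mathfrak{H}^{-k_{2}}$, shows via the estimate (20) (which is exactly where (12) enters) that the classes $C_{\{m_{n}\}}$, $C_{(m_{n})}$, $C^{\infty}$ and the exponential-type vectors, together with their types, are the same whether constructed from $A$ in $\mathfrak{H}$ or from its restriction/extension in $\mathfrak{H}^{\pm k}$ (relations (21), (22)), and then applies (13) in the endpoint spaces. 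You instead keep the operator fixed and move the vector: you identify the endpoint best approximations explicitly, $\mathcal{E}_{r}(y,\mathfrak{H}^{k_{1}})=\left(\mathcal{E}_{r}(f,A)^{2}+\mathcal{E}_{r}(A^{k_{1}}f,A)^{2}\right)^{1/2}$ and $\mathcal{E}_{r}(y,\mathfrak{H}^{-k_{2}})=\mathcal{E}_{r}\left((A+I)^{-k_{2}}f,A\right)$ (via the Pythagorean computation with $E(r)$ and the fact that $(A+I)^{-k_{2}}$ maps $E(r)\mathfrak{H}$ onto itself), and then prove stability of the vector classes under $A^{k_{1}}$ and $(A+I)^{k_{2}}$ from (12) --- which is the same computation as (20), only read as stability of the classes for the fixed operator $A$ rather than as their coincidence across the scale. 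The net content is equivalent: your route buys explicit minimizers and avoids setting up the self-adjoint realizations of $A$ in $\mathfrak{H}^{\pm k}$ and re-invoking the abstract result there (at the cost of the spectral bookkeeping and the $+\infty$ convention when $f\notin\mathcal{D}(A^{k_{1}})$), while the paper's route needs no formula for the endpoint best approximations, only the transfer of (13) to the endpoint spaces, and so transports more mechanically to other situations (as in Theorem 9).
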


\begin{proof}
  Show first that the spaces $C_{\{m_{n}\}}(A)$ and $C_{(m_{n})}(A)$
  considered as subspaces of $\mathfrak{H}$ coincide with the
  corresponding subspaces $C_{\{m_{n}\}}^{k}(A)$ and
  $C_{(m_{n})}^{k}(A)$ constructed in the Hilbert space
  $\mathfrak{H}^{k}$ from the restriction $A\upharpoonright
  \mathfrak{H}^{k}$ which is a nonnegative self-adjoint operator
  in~$\mathfrak{H}^{k}$.

So, let $f \in C_{\{m_{n}\}}(A)$. Then

$$
\exists \alpha > 0, \ \exists c > 0: \left\|A^{i}f\right\|_{\mathfrak{H}^{k}} = \left(\left\|A^{i}f\right\|^{2} + \left\|A^{i + k}f\right\|^{2}\right)^{1/2}
$$
$$
\qquad \qquad \qquad \leq c\left(\alpha^{2i}m_{i}^{2} + \alpha^{2(i +
    k)}m_{k + i}^{2}\right)^{1/2} \leq \widetilde{c}\left(\alpha
  h^{k}\right)^{i}m_{i}, \leqno (20)
$$
i.e., $ C_{\{m_{n}\}}(A) \subseteq C_{\{m_{n}\}}^{k}(A)$. From (19) we
also have the embedding $ C_{(m_{n})}(A) \subseteq
C_{(m_{n})}^{k}(A)$. The inverse embeddings are consequences of the
estimate $\left\|A^{i}f\right\|_{\mathfrak{B}} \leq
\left\|A^{i}f\right\|_{\mathfrak{H}^{k}}$. Thus we have
$$
C_{\{m_{n}\}}(A) = C_{\{m_{n}\}}^{k}(A), \quad C_{(m_{n})}(A) = C_{(m_{n})}^{k}(A). \leqno (21)
$$
It is also evident that $C_{k}^{\infty}(A) = C^{\infty}(A)$.

Since for a vector $g \in C_{\{1\}}(A) \ (m_{n} \equiv 1)$ of type
$\sigma(g, A) \leq k$, the inequality
$$
\left\|A^{i}f\right\|_{\mathfrak{H}^{k}} = \left(\left\|A^{i}g\right\|^{2} + \left\|A^{i + k}g\right\|^{2}\right)^{1/2} \leq c\left(\alpha^{2i} + \alpha^{2(i + k)}\right)^{1/2} \leq \widetilde{c}\alpha^{i}, \quad \widetilde{c} = \left( 1 + \alpha^{2k}\right)^{1/2},
$$
is valid, the space $C_{\{1\}}(A)$ coincides with
$C_{\{1\}}^{k}(A)$. Moreover, the type of $g$ for the operator $A$ in
the space $C_{\{1\}}(A)$ is the same as the one for $A\upharpoonright
\mathfrak{H}^{k}$ in the space $C_{\{1\}}^{k}(A)$.

Denote by $\widetilde{A}$ the closure of $A$ in the space
$\mathfrak{H}^{-k}$. It is easy to make sure that $\widetilde{A}$ is a
nonnegative self-adjoint operator in $\mathfrak{H}^{-k}$ and if the
space $\mathfrak{H}$ is considered as a subspace of
$\mathfrak{H}^{-k}$, then we arrive at the previous situation. For
this reason,
$$
C_{\{m_{n}\}}(A) = C_{\{m_{n}\}}^{-k}(A), \quad C_{(m_{n})}(A) = C_{(m_{n})}^{-k}(A), \leqno (22)
$$
and
$$
\forall g \in C_{\{1\}}(A) = C_{\{1\}}^{k}(A): \sigma(g, A) = \sigma(g, \widetilde{A}).
$$
Taking into account that the restriction (extension) of the semigroup
$\left\{e^{-At}\right\}_{t \in \mathbb{R}_{+}}$ to the space
$\mathfrak{H}^{k_{1}}, \ k_{1} > 0,$ (to $\mathfrak{H}^{-k_{2}}, \
k_{2} > 0)$ is an analytic contractive $C_{0}$-semigroup in
$\mathfrak{H}^{k_{1}}$ (in $\mathfrak{H}^{-k_{2}}$), the embeddings
$C_{\{m_{n}\}}^{k}(A) \subseteq \mathfrak{B}, \ C_{(m_{n})}(A)
\subseteq \mathfrak{B}$ and the chain (19), we obtain for $y(t) =
e^{-At}f, \ f \in C_{\{m_{n}\}}(A)$ and $y_{0}(t) = e^{-At}g, \ g \in
C_{\{1\}}(A)$ that
$$
\left\|e^{-At}f - e^{-As}g\right\|_{\mathfrak{B}} \leq \left\|e^{-At}f - e^{-As}g\right\|_{\mathfrak{H}^{k_{1}}} \leq \|f - g\|_{\mathfrak{H}^{k_{1}}},
$$
whence
$$
\mathcal{E}_{r}(y, \mathfrak{B}) = \inf_{y_{0} \in S_{0}: \sigma (y_{0}) \leq r} \sup{s \in \mathbb{R}_{+}} \left\|e^{-As}f - e^{-As}g\right\|_{\mathfrak{B}} \leq \|f - g\|_{\mathfrak{H}^{k_{1}}},
$$
that is, \vspace*{-1mm}
$$
\mathcal{E}_{r}(y, \mathfrak{B}) \leq \mathcal{E}_{r}(f, A\upharpoonright \mathfrak{H}^{k_{1}}). \leqno (23)
$$

From (19) it follows that
$$
\forall t \in \mathbb{R}_{+}: \left\|e^{-At}f - e^{-At}g\right\|_{\mathfrak{H}^{-k_{2}}} \leq \left\|e^{-At}f - e^{-At}g\right\|_{\mathfrak{B}}.
$$
This implies that
$$
\|f - g\|_{\mathfrak{H}^{-k_{2}}} = \sup_{t \in \mathbb{R}_{+}} \left\|e^{-At}f - e^{-At}g\right\|_{\mathfrak{H}^{-k_{2}}} \leq \sup_{t \in \mathbb{R}_{+}} \left\|e^{-At}f - e^{-At}g\right\|_{\mathfrak{B}}
$$
and, hence,\vspace*{-2mm}
$$
\inf_{g \in C_{\{1\}}(A): \sigma (g) \leq r}\|f - g\|_{\mathfrak{H}^{-k_{2}}} \leq
\inf_{y_{0} \in S_{0}: \sigma (y_{0}) \leq r} \sup_{t \in \mathbb{R}_{+}} \|y(t) - y_{0}(t)\|_{\mathfrak{B}}.
$$
Thus,\vspace*{-1mm}
$$
\mathcal{E}_{r}(f, \widetilde{A}) \leq \mathcal{E}_{r}(y, \mathfrak{B}). \leqno (24)
$$
Inequalities (23) and (24), with regard to (19), (21), (22) and
Theorem 5, complete the proof of Theorem 7.
\end{proof}

\vspace*{-2mm}\medskip {\bf 5.} \ Let $A$ be a self-adjoint operator in
$\mathfrak{H}$ whose spectrum is discrete. Assume that its eigenvalues
$\lambda_{k} = \lambda_{k}(A), \ k \in \mathbb{N},$ satisfy the
condition $\sum_{k = 1}^{\infty} \lambda_{k}^{-p} < \infty$
with some $p > 0$. Suppose also that $\lambda_{k}$ are enumerated in
ascending order and each one is counted according to its multiplicity
and denote by $\{e_{n}\}_{n \in \mathbb{N}}$ the orthonormal basis in
$\mathfrak{H}$ consisting of eigenvectors of $A$. Then the spectral
function $E(\lambda)$ of the operator $A$ has the form
$$
E(\lambda)f = \sum_{\lambda_{k} \leq \lambda} f_{k}e_{k},
$$
where $f_{k} = (f_{k}, e_{k})$ are the Fourier coefficients of $f$, and
$$
\mathcal{E}_{r}(f, A) = \sum_{\lambda_{k} > r} f_{k}e_{k}.
$$
As it has been shown in [6], the following assertion holds true.

\begin{pr} The following equivalence relations are valid:
$$
\begin{array}{rcl} f \in C^{\infty}(A) & \Longleftrightarrow & \forall \alpha > 0, \ \exists c = c(\alpha) > 0:
|f_{k}| \leq c \lambda_{k}^{-\alpha}, \\
f \in C_{\{1\}}(A) & \Longleftrightarrow & \exists n_{0} \in \mathbb{N}: f_{k} = 0 \ \text{as} \ k \geq n_{0}, \\
f \in C_{\{m_{n}\}}(A) & \Longleftrightarrow & \exists \alpha > 0, \ \exists c > 0: |f_{k}| \leq c \tau^{-1}(\alpha
\lambda_{k}), \\
f \in C_{(m_{n})}(A) & \Longleftrightarrow & \forall \alpha > 0, \ \exists c = c(\alpha)> 0: |f_{k}|
\leq c \tau^{-1}(\alpha \lambda_{k}). \\
\end{array}
$$
(The function $\tau (\lambda)$ was defined in (11)).
\end{pr}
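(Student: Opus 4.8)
The plan is to argue directly from the spectral expansion, reducing everything to an interplay between the quantities $\|A^{n}f\|$ and the size of the Fourier coefficients $f_{k}$; one could equally route the argument through the characterization (13) together with the identity $\mathcal{E}_{r}(f,A)^{2}=\|(I-E(r))f\|^{2}=\sum_{\lambda_{k}>r}|f_{k}|^{2}$, but the direct route is more self-contained. Since $Ae_{k}=\lambda_{k}e_{k}$ and $\{e_{k}\}$ is orthonormal, for $f\in C^{\infty}(A)$ we have $A^{n}f=\sum_{k}\lambda_{k}^{n}f_{k}e_{k}$ and $\|A^{n}f\|^{2}=\sum_{k}\lambda_{k}^{2n}|f_{k}|^{2}$; in particular $\lambda_{k}^{n}|f_{k}|\le\|A^{n}f\|$ for all $k,n$. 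Throughout I would use the standing hypotheses: $\sum_{k}\lambda_{k}^{-p}<\infty$ (which forces $\lambda_{k}\to\infty$, so only finitely many $\lambda_{k}$ lie below any bound), condition (10) (so that $\tau$ is entire and $\mu^{n}/m_{n}\to\infty$ for every fixed $\mu>0$), and condition (12).

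The case of $C_{\{1\}}(A)$ is handled first: by the description $C_{\{1\}}(A)=\{E(\lambda)g:\lambda>0,\ g\in\mathfrak{H}\}$ recalled in Section 1 and the discreteness of the spectrum, $f\in C_{\{1\}}(A)$ if and only if $f=E(\lambda)f$ for some $\lambda$, i.e. $f_{k}=0$ once $\lambda_{k}>\lambda$; as $\lambda_{k}\uparrow\infty$ this says precisely $f_{k}=0$ for $k\ge n_{0}$, and conversely any such $f$ obeys $\|A^{n}f\|\le\lambda_{n_{0}-1}^{n}\|f\|$. For $C^{\infty}(A)$: if $f\in C^{\infty}(A)$ then $\lambda_{k}^{n}|f_{k}|\le\|A^{n}f\|$ is bounded in $k$, so $|f_{k}|\le c_{n}\lambda_{k}^{-n}$, and for a given $\alpha$ one takes $n\ge\alpha$ and absorbs the finitely many $k$ with $\lambda_{k}<1$ into the constant; conversely, if $|f_{k}|\le c(\alpha)\lambda_{k}^{-\alpha}$ for all $\alpha$, then $\|A^{n}f\|^{2}\le c(\alpha)^{2}\sum_{k}\lambda_{k}^{2n-2\alpha}<\infty$ as soon as $2\alpha-2n>p$.

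The substantive cases are $C_{\{m_{n}\}}(A)$ and $C_{(m_{n})}(A)$. For "class membership $\Rightarrow$ coefficient decay", from $\|A^{n}f\|\le c\alpha^{n}m_{n}$ I get $|f_{k}|\le c\,m_{n}/\mu^{n}$ for every $n$, where $\mu=\lambda_{k}/\alpha$, hence $|f_{k}|\le c\,\inf_{n}(m_{n}/\mu^{n})=c/\max_{n}(\mu^{n}/m_{n})$; since $\tau(\mu/2)=\sum_{n}2^{-n}\mu^{n}/m_{n}\le 2\max_{n}(\mu^{n}/m_{n})$, this yields $|f_{k}|\le 2c\,\tau^{-1}(\lambda_{k}/(2\alpha))$, which is the required estimate with the rescaled parameter $1/(2\alpha)$; letting $\alpha$ range over all positive reals gives the statement for $C_{(m_{n})}(A)$. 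For the converse "coefficient decay $\Rightarrow$ class membership", suppose $|f_{k}|\le c\,\tau^{-1}(\alpha\lambda_{k})$ and fix an integer $q\ge p$ with $\sum_{k}\lambda_{k}^{-q}<\infty$. Using the two-term minorant $\tau(\alpha\lambda_{k})^{2}\ge\frac{(\alpha\lambda_{k})^{n}}{m_{n}}\cdot\frac{(\alpha\lambda_{k})^{n+q}}{m_{n+q}}$ one gets $\lambda_{k}^{2n}|f_{k}|^{2}\le\frac{c^{2}m_{n}m_{n+q}}{\alpha^{2n+q}}\lambda_{k}^{-q}$, and summing over $k$ and bounding $m_{n+q}\le C_{q}(h^{q})^{n}m_{n}$ by applying (12) exactly $q$ times, one arrives at $\|A^{n}f\|^{2}\le C'\gamma^{2n}m_{n}^{2}$ with $\gamma=h^{q/2}/\alpha$; thus $f\in C_{m_{n}}^{\gamma}(A)\subseteq C_{\{m_{n}\}}(A)$, and since $\gamma\to0$ as $\alpha\to\infty$ the claim for $C_{(m_{n})}(A)$ follows as well.

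I expect the last step to be the only delicate point: one must resist the temptation to dominate $m_{2n}$ by $m_{n}^{2}$ — condition (12) does not give this, and in fact sequences like $m_{n}=h^{n^{2}}$ satisfy (10), (12) but violate it — and instead peel off only the fixed number $q$ of multiplicative steps dictated by the exponent $p$, so that the extra factor $(h^{q})^{n}$ picked up is genuinely geometric in $n$. The accompanying loss of a power $\lambda_{k}^{-q}$ is harmless, not because it is small, but because $\sum_{k}\lambda_{k}^{-q}<\infty$ lets it be summed away; correspondingly, on the "$\Rightarrow$" side the only subtlety is that $\tau(\mu/2)$ (rather than $\tau(\mu)$) appears, which is why the parameter $\alpha$ must be replaced by a multiple of its reciprocal. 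All remaining manipulations are the routine bookkeeping of matching constants and quantifier orders in the definitions of $C_{\{m_{n}\}}(A)$ and $C_{(m_{n})}(A)$.
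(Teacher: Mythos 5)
Your proposal is correct, but note that the paper itself contains no proof of Proposition 1: it is quoted from reference [6] (``As it has been shown in [6]\dots''), so there is no in-paper argument to compare against. The route implicit in the cited literature, and the one consonant with the rest of the paper, goes through the approximation characterizations (13) together with the identity $\mathcal{E}_{r}(f,A)=\|(I-E(r))f\|=\bigl(\sum_{\lambda_{k}>r}|f_{k}|^{2}\bigr)^{1/2}$, which you mention and deliberately set aside; your direct spectral computation is a self-contained alternative and it checks out. The key steps are all sound: the elementary bound $\lambda_{k}^{n}|f_{k}|\leq\|A^{n}f\|$; the comparison $\sup_{n}(\mu^{n}/m_{n})\geq\frac{1}{2}\tau(\mu/2)$, which correctly produces the rescaled parameter $1/(2\alpha)$ in the forward direction; and, in the converse, the two-factor minorant $\tau(\alpha\lambda_{k})^{2}\geq\frac{(\alpha\lambda_{k})^{n}}{m_{n}}\cdot\frac{(\alpha\lambda_{k})^{n+q}}{m_{n+q}}$ combined with applying (12) exactly $q$ times, so that only a genuinely geometric factor $(h^{q})^{n}$ is picked up while the loss $\lambda_{k}^{-q}$ is absorbed by $\sum_{k}\lambda_{k}^{-q}<\infty$; your warning against replacing this with $m_{2n}\leq m_{n}^{2}$ is well taken, since (12) does not imply any such log-convexity. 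The quantifier bookkeeping distinguishing $C_{\{m_{n}\}}(A)$ from $C_{(m_{n})}(A)$ (the maps $\alpha\mapsto 1/(2\alpha)$ and $\gamma=h^{q/2}/\alpha$ being onto $(0,\infty)$) is also handled correctly, as are the easy $C^{\infty}(A)$ and $C_{\{1\}}(A)$ cases. One slip, inconsequential to the argument: in your parenthetical about condition (10) you write that $\mu^{n}/m_{n}\to\infty$ for fixed $\mu$; (10) gives the opposite, $\mu^{n}/m_{n}\to 0$ (equivalently $m_{n}/\mu^{n}\to\infty$), which is precisely what makes $\tau$ entire and $\sup_{n}(\mu^{n}/m_{n})$ finite — nothing in your proof uses the misstated version.
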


Let now $y(t)$ be a weak solution of (1). Then
$$
y(t) = \sum_{k = 1}^{\infty} e^{-\lambda_{k}t} f_{k}e_{k}, \quad \sum_{k = 1}^{\infty} |f_{k}|^{2} < \infty. \leqno (25)
$$
The solution $y(t)$ is an entire vector-valued function of exponential type $(y \in S_{0})$ if and only if
$$
\exists n_{0} \in \mathbb{N}: f_{k} = 0 \ \text{as} \ k \geq n_{0}.
$$
Proposition 1, Theorems 3, 5 and (13) imply the following.

\begin{theo}  The following equivalence relations take place:
$$
\begin{array}{rcl}
y \in C^{n}(\mathbb{R}_{+}, \mathfrak{H}) & \Longleftrightarrow & \mathcal{E}_{\lambda_{k}}(y)
= o\left(\lambda_{k + 1}^{-n}\right) \quad (n \to \infty), \\
y \in C^{\infty}(\mathbb{R}_{+}, \mathfrak{H}) & \Longleftrightarrow & \forall \alpha > 0:
\mathcal{E}_{\lambda_{k}}(y) = O \left(\lambda_{k + 1}^{-\alpha}\right) \quad (n \to \infty), \\
y \in C_{\{m_{n}\}} & \Longleftrightarrow & \exists \alpha > 0: \mathcal{E}_{\lambda_{k}}(y) =
O \left(\tau^{-1}(\alpha\lambda_{k + 1})\right) \quad (n \to \infty), \\
y \in C_{(m_{n})} & \Longleftrightarrow & \forall \alpha > 0: \mathcal{E}_{\lambda_{k}}(y) =
O \left(\tau^{-1}(\alpha\lambda_{k + 1})\right) \quad (n \to \infty). \\
\end{array}
$$
\end{theo}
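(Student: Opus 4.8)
The plan is to transfer the whole problem from the space $S$ of weak solutions back to the Hilbert space $\mathfrak{H}$ via the isometry $F:\mathfrak{H}\to S$, $Ff=e^{-At}f$, used in the proof of Theorem~5. Writing $y(t)=e^{-At}f$ with $f=\sum_{k}f_{k}e_{k}$, formula~(4) gives $\mathcal{E}_{r}(y)=\mathcal{E}_{r}(f,A)=\|(I-E(r))f\|$, and for the discrete spectrum at hand
$$
\mathcal{E}_{r}(y)^{2}=\sum_{\lambda_{j}>r}|f_{j}|^{2}.
$$
Hence $r\mapsto\mathcal{E}_{r}(y)$ is nonincreasing and is constant, equal to $\mathcal{E}_{\lambda_{k}}(y)$, on each half-interval $[\lambda_{k},\lambda_{k+1})$. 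This step-function property is the mechanism that lets one pass between the continuous parameter $r\to\infty$ occurring in Theorems~3--5 and the discrete sequence $r=\lambda_{k}$ of the statement; in particular, evaluating a bound $\mathcal{E}_{r}(y)=O(\psi(r))$ at the right end of a block forces $\mathcal{E}_{\lambda_{k}}(y)=O(\psi(\lambda_{k+1}))$, which is precisely why $\lambda_{k+1}$, and not $\lambda_{k}$, appears on the right.

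For the last three equivalences I would simply combine this observation with the identities~(14) and Proposition~1. By~(14), $y$ lies in $C^{\infty}(\mathbb{R}_{+},\mathfrak{H})$, in $C_{\{m_{n}\}}$, or in $C_{(m_{n})}$ exactly when $f=F^{-1}y$ lies in $C^{\infty}(A)$, in $C_{\{m_{n}\}}(A)$, or in $C_{(m_{n})}(A)$, respectively, and Proposition~1 describes these classes through the coefficient bounds $|f_{k}|\le c\lambda_{k}^{-\alpha}$ (every $\alpha$), $|f_{k}|\le c\,\tau^{-1}(\alpha\lambda_{k})$ (some $\alpha$), $|f_{k}|\le c\,\tau^{-1}(\alpha\lambda_{k})$ (every $\alpha$). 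That each such bound is equivalent to the asserted decay of $\mathcal{E}_{\lambda_{k}}(y)$ is then routine: in one direction the trivial inequality $|f_{k+1}|\le\mathcal{E}_{\lambda_{k}}(y)$ isolates the bound on a single coefficient; in the other, one sums the pointwise estimate over the tail $\{\lambda_{j}>\lambda_{k}\}$, using $\sum_{j}\lambda_{j}^{-p}<\infty$ for convergence and, in the two ultradifferentiable cases, the super-polynomial growth of $\tau$ to absorb the resulting factors at the price of an arbitrarily small decrease of $\alpha$. (Equivalently, these three lines follow at once from the continuous-parameter versions already proved in Theorem~5, with the step-function property alone doing the conversion to $r=\lambda_{k}$.)

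The first equivalence, for $C^{n}(\mathbb{R}_{+},\mathfrak{H})$, is the one that needs genuine care, and I expect its converse half to be the main obstacle. Since $y=e^{-At}f$ is $n$ times continuously differentiable on $\mathbb{R}_{+}=[0,\infty)$ precisely when $f\in\mathcal{D}(A^{n})$, i.e. $\sum_{j}\lambda_{j}^{2n}|f_{j}|^{2}<\infty$, the direct implication is again easy: $\mathcal{E}_{\lambda_{k}}(y)^{2}=\sum_{\lambda_{j}>\lambda_{k}}|f_{j}|^{2}\le\lambda_{k+1}^{-2n}\sum_{\lambda_{j}>\lambda_{k}}\lambda_{j}^{2n}|f_{j}|^{2}$, and the last factor, being the tail of a convergent series, tends to $0$, so $\mathcal{E}_{\lambda_{k}}(y)=o(\lambda_{k+1}^{-n})$. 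For the converse one starts from $\mathcal{E}_{\lambda_{k}}(y)=o(\lambda_{k+1}^{-n})$, uses the step-function property to write $\mathcal{E}_{r}(y)\le r^{-n}\varphi(1/r)$ with $\varphi(1/r)\to0$ as $r\to\infty$, replaces $\varphi$ by a nondecreasing majorant $\omega$ with $\omega(0)=0$, and then invokes the inverse theorem (Theorem~4): its completeness and closedness argument yields $y\in C^{n}$ once the increments $\|y_{i}^{(n)}-y_{i-1}^{(n)}\|_{S}$ of the dyadic approximants, which it controls by quantities of size $\omega(2^{-i})$, are shown to be summable in the relevant sense. This last point is sensitive to the rate of decay of $\omega$, hence to the $o$ in the hypothesis, and is where the argument must be handled most carefully; granting it, the theorem is obtained by assembling Proposition~1 with Theorems~3, 4 and~5 as above.
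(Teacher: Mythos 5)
Your handling of the last three equivalences is fine and is essentially the paper's own route: the paper dismisses Theorem~8 in one line as a consequence of Proposition~1, Theorems~3, 5 and (13), and your step-function observation ($\mathcal{E}_{r}(y)=\mathcal{E}_{\lambda_{k}}(y)$ for $\lambda_{k}\le r<\lambda_{k+1}$, which is exactly what puts $\lambda_{k+1}$ on the right-hand side) is the correct mechanism for converting the continuous-parameter statements of Theorem~5 into the discrete ones.

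The genuine gap is in the converse half of the first equivalence, precisely the point you flag and then ask to be granted; it cannot be granted. Passing from $\mathcal{E}_{\lambda_{k}}(y)=o\left(\lambda_{k+1}^{-n}\right)$ to $\mathcal{E}_{r}(y)\le r^{-n}\omega(1/r)$ with a modulus-type majorant $\omega$ is harmless, but Theorem~4 does not finish the argument from there: its proof only gives $\left\|y_{i}^{(n)}-y_{i-1}^{(n)}\right\|_{S}\le c\,\omega(2^{-i})$, and increments tending to $0$ do not make $\left(y_{i}^{(n)}\right)$ a Cauchy sequence; one needs a summability (Dini-type) condition such as $\sum_{i}\omega(2^{-i})<\infty$, which the bare $o$-hypothesis does not supply. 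Moreover, no other argument can close this gap, because the implication fails under the stated hypotheses of Section~5. Take $\lambda_{k}=2^{k}$ (so $\sum_{k}\lambda_{k}^{-p}<\infty$) and $|f_{k}|^{2}=\lambda_{k}^{-2n}/k$. Then
$$
\mathcal{E}_{\lambda_{k}}(y)^{2}=\sum_{j\ge k+1}\frac{\lambda_{j}^{-2n}}{j}\le\frac{1}{k+1}\cdot\frac{\lambda_{k+1}^{-2n}}{1-2^{-2n}}=o\left(\lambda_{k+1}^{-2n}\right),
$$
while $\sum_{j}\lambda_{j}^{2n}|f_{j}|^{2}=\sum_{j}1/j=\infty$, so $f\notin\mathcal{D}(A^{n})$; since (by closedness of $A$, as noted before Theorem~3) $y=e^{-At}f\in C^{n}(\mathbb{R}_{+},\mathfrak{H})$ forces $f\in\mathcal{D}(A^{n})$, this $y$ satisfies the right-hand side of the first line but not the left. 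The same defect is already present in the paper's Theorem~4, whose proof tacitly treats vanishing increments as if they were summable, so your proposal inherits rather than creates the problem; but as written the first equivalence of the statement is not proved, and its sufficiency direction cannot be proved without an additional hypothesis (e.g.\ summability of $\lambda_{k+1}^{n}\mathcal{E}_{\lambda_{k}}(y)$-type quantities or a Dini condition on the majorant).
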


\vspace*{-1mm}\medskip {\bf 6.}  Put $\mathfrak{H} = L_{2}(\Omega)$, where $\Omega$
is a bounded domain in $\mathbb{R}^{q}$ with piecewise smooth boundary
$\partial\Omega$, and denote by $B'$ the operator generated in
$L_{2}(\Omega)$ by the differential expression \vspace*{-3mm}
$$
(\mathcal{L}u)(x)
= - \sum_{i = 1}^{q}\sum_{k = 1}^{q}\frac{\partial}{\partial x_{i}}\left(a_{ik}(x)\frac{\partial u(x)}{\partial x_{k}}\right) + c(x)u(x), \leqno (26)
$$
on\vspace*{-1mm}
$$
\mathcal{D}(B') = \left\{u \in C^{2}(\overline{\Omega})\bigl|
  u\upharpoonright_{\partial\Omega} = 0\right\}. \leqno (27) $$ 
  It is
assumed that $a_{ik}(x), c(x) \in C^{\infty}(\overline{\Omega}), \
c(x) \geq 0$. Suppose also the expression (26) to be of elliptic type
in $\overline{\Omega}$. In this case all the eigenvalues $\mu_{i}(x),
\ i = 1, \dots q$, of the matrix $\|a_{ik}(x)\|_{i,k = 1}^{q}, \ x \in
\overline{\Omega}$, have the same sign; without loss of generality we
may assume $\mu_{i}(x) > 0, \ x \in \overline{\Omega}$.

It is not hard to make sure that $B'$ is a positive definite Hermitian
operator with dense domain in $L_{2}(\Omega)$.  So, $B'$ admits a
closure to a positive definite selfadjoint operator $B$ on
$L_{2}(\Omega)$.  We shall call $B$ the operator generated by (26),
(27). The spectrum of $B$ is discrete, and for its eigenvalues,
$\lambda_{1}(B) < \lambda_{2}(B) < \dots < \lambda_{n}(B) < \dots$, the
estimate
$$
c_{1}n^{2/q} \leq \lambda_{n}(B) \leq c_{2}n^{2/q}, \quad 0 < c_{i} = {\rm {const}}, \quad i = 1, 2. \leqno (28)
$$
is valid (see [8]). Denote by $e_{n}(x), \ n \in \mathbb{N}$, the
orthonormal basis in $L_{2}(\Omega)$, consisting of eigenfunctions of
$B$.

In the case where $\Omega$ is a $q$-dimensional cube, $0 < x_{k} < a, \
k = 1, \dots, q, \ a > 0$, and $\mathcal{L} = -\sum_{i =
  1}^{q}\frac{\partial^{2}}{\partial x_{i}^{2}}$, the following
formulas for the eigenvalues $\lambda_{n_{1}\dots n_{q}}, \ n_{k} \in
\mathbb{N}$, and eigenfunctions $e_{n_{1}\dots n_{q}}(x)$ of the
operator $B$ hold: \vspace*{-2mm}
$$
\lambda_{n_{1}\dots n_{q}} = \frac{\pi^{2}}{a^{2}}\sum_{k = 1}^{q}n_{k}^{2}; \quad e_{n_{1}\dots n_{q}}(x) = \left( \frac{2}{a}\right)^{q/2}\prod_{k = 1}^{q} \sin\frac{\pi}{a}x_{k}.
$$

Let $y(t) = u(t, x) \in C(\mathbb{R}_{+}, L_{2}(\Omega))$ be a weak
solution of the problem
$$
\left(\frac{\partial}{\partial t} - \sum_{k = 1}^{q}\sum_{i = 1}^{q} \frac{\partial}{\partial x_{k}}\left(a_{ki}(x)\frac{\partial}{\partial x_{i}}\right) + c(x)\right)u(t,x) = 0, \leqno (29)
$$
$$
\forall t > 0, \ \forall x \in \partial \Omega: u(t, x) = 0,  \leqno (30)
$$
where the conditions on $a_{ki}(x)$ and $c(x)$ are the same as
before. Then $u(t, x)$ admits a representation of form (25). Set
$$
L_{2}^{n} = \mathcal{D}(B^{n}), \quad \|f\|_{L_{2}^{n}} = \left( \|f\|^{2} + \left\|B^{n}f\right\|^{2}\right)^{1/2},
$$
where $B$ is an operator generated in the space $L_{2}(\Omega)$ by
expression (26) and boundary value condition (27). The space
$L_{2}^{n}$ is continuously and densely embedded into
$L_{2}(\Omega)$. Denote by $L_{2}^{-n}$ the negative space
corresponding to the positive one $L_{2}^{n} \subset
L_{2}(\Omega)$. In the case where $\mathcal{L} = -\sum_{k =
  1}^{q} \frac{\partial^{2}}{\partial x_{k}^{2}}$, \ $L_{2}^{n}$ is
none other than the well-known Sobolev space
$\stackrel{\circ}{W}_{2}^{2n}(\Omega)$.

Using estimate (28) for $\lambda_{k}(B)$ and Theorem 8, we obtain in a
way analogous to that used in the proof of Theorem 7 the following
assertion.

\begin{theo} Let $\mathfrak{B}$ be a Banach space and let
$$
L_{2}^{n_{1}} \subseteq \mathfrak{B} \subseteq L_{2}^{-n_{2}}, \quad n_{1}, n_{2} \in \mathbb{N},
$$
be a chain of continuously and densely embedded into each other
spaces. Suppose also the sequence $\left\{m_{n}\right\}_{n =
  1}^{\infty}$ to satisfy (10) and (12). Then
$$
\begin{array}{rcl}
y(t) = u(t, x) \in C^{\infty}(\mathbb{R}_{+}, L_{2}(\Omega)) & \Longleftrightarrow & \alpha > 0:\mathcal{E}_{\lambda_{k}}^{\mathfrak{B}}(y) = O\left(\frac{1}{(k + 1)^{\alpha}}\right), \\
y(t) \in C_{\{m_{n}\}} & \Longleftrightarrow & \exists \alpha > 0: \mathcal{E}_{\lambda_{k}}^{\mathfrak{B}}(y) = O \left(\tau^{-1}\left(\alpha(k + 1)^{2/q}\right)\right), \\
y(t) \in C_{(m_{n}} & \Longleftrightarrow & \forall \alpha > 0: \mathcal{E}_{\lambda_{k}}^{\mathfrak{B}}(y) = O \left(\tau^{-1}\left(\alpha(k + 1)^{2/q}\right)\right), \\
\end{array}
$$
where
$$
\mathcal{E}_{\lambda_{k}}^{\mathfrak{B}}(y) = \inf_{y_{0} \in S_{0}: \sigma(y_{0})\leq \lambda_{k}} \sup_{s \in \mathbb{R}_{+}} \|y(s) - y_{0}(s)\|_{\mathfrak{B}}.
$$
\end{theo}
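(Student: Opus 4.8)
\noindent\textit{Plan of proof.}
The plan is to follow the scheme of the proof of Theorem 7, with the abstract operator $A$ replaced by the elliptic operator $B$ of item~{\bf 6}, the chain (19) replaced by $L_{2}^{n_{1}} \subseteq \mathfrak{B} \subseteq L_{2}^{-n_{2}}$, and Theorem 5 replaced by Theorem 8; at the end, estimate (28) is used to pass from the spectral scale $\lambda_{k+1}$ to the power scale $(k+1)^{2/q}$. Throughout, $\mathfrak{H} = L_{2}(\Omega)$ and a weak solution is written $y(t) = u(t,x) = e^{-Bt}f$ with $f \in L_{2}(\Omega)$, cf. (25).

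First I would check that Theorem 8 is applicable to $B$ and to the operators arising in the chain. Since $B$ is positive definite self-adjoint with discrete spectrum, (28) gives $\lambda_{n}(B) \geq c_{1} n^{2/q}$, so $\sum_{n} \lambda_{n}(B)^{-p} < \infty$ for every $p > q/2$. The eigenfunctions $e_{n}(x)$ are smooth, hence lie in every $L_{2}^{m}$ and in $\mathfrak{B}$; consequently the restriction $B\upharpoonright L_{2}^{n_{1}}$ and the closure $\widetilde{B}$ of $B$ in $L_{2}^{-n_{2}}$ are again nonnegative self-adjoint operators with discrete spectrum, having the same eigenvalues $\lambda_{k}(B)$ and the same eigenbasis after renormalisation, so Theorem 8 applies to each of them. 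Exactly as in the derivation of (20)--(22), and using property (12) of $\{m_{n}\}$, the classes $C_{\{m_{n}\}}(B)$, $C_{(m_{n})}(B)$, $C^{\infty}(B)$ and the exponential type $\sigma(g,B)$ of a vector $g \in C_{\{1\}}(B)$ are the same whether computed in $L_{2}(\Omega)$, in $L_{2}^{n_{1}}$ from $B\upharpoonright L_{2}^{n_{1}}$, or in $L_{2}^{-n_{2}}$ from $\widetilde{B}$; in particular, by (14) applied to each of these operators and by (21), the membership $y \in C_{\{m_{n}\}}$ is equivalent to $f \in C_{\{m_{n}\}}(B)$ and to the corresponding solution memberships measured in the end spaces of the chain, and likewise for $C_{(m_{n})}$ and $C^{\infty}$. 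Finally, $e^{-Bt}$ restricts (extends) to an analytic contraction $C_{0}$-semigroup on $L_{2}^{n_{1}}$ (on $L_{2}^{-n_{2}}$).

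The core step is a two-sided estimate for $\mathcal{E}_{\lambda_{k}}^{\mathfrak{B}}(y)$. Let $y_{0}(t) = e^{-Bt}g \in S_{0}$ with $\sigma(y_{0}) = \sigma(g,B) \leq \lambda_{k}$; by Proposition 1 such $g$ are precisely the finite combinations of eigenvectors with eigenvalues $\leq \lambda_{k}$. The chain $L_{2}^{n_{1}} \subseteq \mathfrak{B} \subseteq L_{2}^{-n_{2}}$, the contractivity of $e^{-Bs}$ on $L_{2}^{n_{1}}$, and the contractivity of its $L_{2}^{-n_{2}}$-extension (whose value at $s = 0$ is $f - g$) give
$$
\left\|f - g\right\|_{L_{2}^{-n_{2}}} \ \leq\ \sup_{s \in \mathbb{R}_{+}} \left\|y(s) - y_{0}(s)\right\|_{\mathfrak{B}} \ \leq\ \left\|f - g\right\|_{L_{2}^{n_{1}}} ;
$$
passing to the infimum over admissible $y_{0}$, exactly as in (23), (24), we obtain
$$
\mathcal{E}_{\lambda_{k}}\!\left(f, \widetilde{B}\right) \ \leq\ \mathcal{E}_{\lambda_{k}}^{\mathfrak{B}}(y) \ \leq\ \mathcal{E}_{\lambda_{k}}\!\left(f, \, B\upharpoonright L_{2}^{n_{1}}\right),
$$
the outer terms being the Hilbert-space best approximations of item~{\bf 5} for $\widetilde{B}$ and $B\upharpoonright L_{2}^{n_{1}}$. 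Chaining these with Theorem 8: the right inequality and Theorem 8 in $L_{2}^{n_{1}}$ show that $y \in C_{\{m_{n}\}}$, i.e. $f \in C_{\{m_{n}\}}(B) = C_{\{m_{n}\}}^{n_{1}}(B)$, implies $\mathcal{E}_{\lambda_{k}}^{\mathfrak{B}}(y) = O(\tau^{-1}(\alpha \lambda_{k+1}))$ for some $\alpha > 0$; the left inequality and Theorem 8 in $L_{2}^{-n_{2}}$ give the converse, since that rate forces $\mathcal{E}_{\lambda_{k}}(f, \widetilde{B})$ to decay likewise, whence $f \in C_{\{m_{n}\}}^{-n_{2}}(B) = C_{\{m_{n}\}}(B)$. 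The same double use of the sandwich settles the $C_{(m_{n})}$-line, with the quantifier ``$\forall \alpha$'', and the $C^{\infty}$-line, with $\lambda_{k+1}^{-\alpha}$ in place of $\tau^{-1}(\alpha \lambda_{k+1})$. It remains to convert scales: by (28), $c_{1}(k+1)^{2/q} \leq \lambda_{k+1}(B) \leq c_{2}(k+1)^{2/q}$, and since $\tau^{-1}$ is nonincreasing, $\tau^{-1}(\alpha \lambda_{k+1})$ lies between $\tau^{-1}(\alpha c_{2}(k+1)^{2/q})$ and $\tau^{-1}(\alpha c_{1}(k+1)^{2/q})$; absorbing $c_{1}, c_{2}$ into $\alpha$, which affects neither ``$\exists \alpha$'' nor ``$\forall \alpha$'', replaces $\lambda_{k+1}$ by $(k+1)^{2/q}$, and similarly $\lambda_{k+1}^{-\alpha}$ is comparable to $(k+1)^{-2\alpha/q}$ and, under ``$\forall \alpha$'', to $(k+1)^{-\alpha}$. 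This yields the three stated equivalences.

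I expect the main obstacle to be the verification, in the second paragraph, that inserting the Banach space $\mathfrak{B}$ between $L_{2}^{n_{1}}$ and $L_{2}^{-n_{2}}$ neither enlarges nor shrinks the Gevrey-type classes of solutions and preserves their exponential types --- that is, the faithful transfer of the computations (20)--(22) to the operators $B\upharpoonright L_{2}^{n_{1}}$ and $\widetilde{B}$, together with the elementary but indispensable check that these are genuine nonnegative self-adjoint operators whose discrete spectrum coincides with that of $B$, so that Theorem 8 is legitimately applicable to them. The scale conversion via (28) in the last step is pure bookkeeping.
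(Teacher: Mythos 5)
Your proposal is correct and follows essentially the same route as the paper, which proves Theorem 9 only by the remark that it is obtained ``in a way analogous to that used in the proof of Theorem 7'' using Theorem 8 and estimate (28) — precisely the sandwich inequalities of type (23)--(24) for the chain $L_{2}^{n_{1}} \subseteq \mathfrak{B} \subseteq L_{2}^{-n_{2}}$, the identification of the classes as in (20)--(22), and the scale conversion $\lambda_{k+1}(B) \asymp (k+1)^{2/q}$ that you carry out. Your write-up in fact supplies more detail (notably the verification that $B\upharpoonright L_{2}^{n_{1}}$ and $\widetilde{B}$ are self-adjoint with the same discrete spectrum) than the paper itself records.
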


It is relevant to remark that in the case where $a_{ki}(x) =
\delta_{ki}$ and $c(x) \equiv 0$, by virtue of the embedding theorems
for Sobolev spaces, one can take the space $C(\overline{\Omega})$ of
continuous in $\overline{\Omega}$ functions or $L_{p}(\Omega), \ 1
\leq p < \infty,$ as $\mathfrak{B}$ and consider not only the
Dirichlet but some other boundary value problems, in particular, the
Neumann problem.

\end{document}